\DeclareMathAlphabet{\mathpzc}{OT1}{pzc}{m}{it} 
\newtheorem{teo}{Theorem}[section]
\newtheorem{propo}[teo]{Proposition}
\newtheorem{coro}[teo]{Corollary}
\DeclareMathAlphabet{\mathpzc}{OT1}{pzc}{m}{it}
\DeclareFontFamily{U}{mathx}{\hyphenchar\font45}
\DeclareFontShape{U}{mathx}{m}{n}{
      <5> <6> <7> <8> <9> <10>
      <10.95> <12> <14.4> <17.28> <20.74> <24.88>
      mathx10
      }{}
\DeclareSymbolFont{mathx}{U}{mathx}{m}{n}
\DeclareMathAccent{\widecheck}{0}{mathx}{"71}
\DeclareMathAccent{\wideparen}{0}{mathx}{"75}
\author[V. Almeida]{V\'{\i}ctor Almeida$^1$}
\author[J.J. Betancor]{Jorge J. Betancor$^1$}
\author[L. Rodr\'iguez-Mesa]{Lourdes Rodr\'iguez-Mesa$^1$}
\address{$^1$
        Departamento de An\'alisis Matem\'atico,
        Universidad de La Laguna,
        Campus de Anchieta, Avda. Astrof\'{\i}sico Francisco S\'anchez, s/n,
        38271, La Laguna (Sta. Cruz de Tenerife), Spain.}
\email{valmeida@ull.es, jbetanco@ull.es, lrguez@ull.es}
\thanks{This paper is partially supported by MTM2013-44357-P}
\date{\today}
\begin{document}

\title[Molecules associated to Hardy spaces with pointwise variable anisotropy]
{Molecules associated to Hardy spaces with pointwise variable anisotropy}

\subjclass[2010]{Primary: 42B30. Secondary:42C40. 41A63 }

\keywords{Pointwise variable anisotropy, Hardy spaces, molecules.}

\begin{abstract}
In this paper we introduce molecules associated to Hardy spaces with pointwise variable anisotropy, and prove that each molecule can be represented as a sum of atoms.
\end{abstract}

\maketitle

\section{Introduction}\label{sec:intro}
Bownik \cite{Bow} introduced and investigated anisotropic Hardy spaces associated with expansive matrices. Bownik's anisotropic Hardy spaces have been also studied in \cite{BB}, \cite{BD}, \cite{BLYZ}, \cite{Hu}, \cite{Wan} and \cite{ZL}, amongst others.

Recently, Dekel, Petrushev and Weissblat \cite{DPW} generalized Bownik's spaces by constructing Hardy spaces $H^p(\Theta)$, $0<p\leq 1$, associated with ellipsoid multilevel covers $ \Theta$. The anisotropy in Bownik's spaces is the same one in each point in $\mathbb{R}^n$, while anisotropy in $H^p(\Theta )$ can change rapidly from point to point in $\mathbb{R}^n$ and in depth from level to level. The ellipsoid covers $\Theta$ induce quasidistances on $\mathbb{R}^n$ and spaces of homogeneous type arise when on $\mathbb{R}^n$ we consider those quasidistances and the Lebesgue measure. Hardy spaces on spaces of homogeneous type have been studied by many authors (see, for instance, \cite{CW1}, \cite{CW2}, \cite{GLY}, \cite{HMY} and \cite{MS}). In the general setting of spaces of homogeneous type the Hardy space $H^p$ has not sufficient structure when $p$ is close to zero. However, anisotropic Hardy spaces defined in \cite{Bow} and the more general case one introduced in \cite{DPW} cover the full range $0<p\leq 1$.

In \cite{DPW}, after defining Hardy spaces $H^p(\Theta)$ by using maximal functions, atomic characterizations of the distributions in $H^p(\Theta )$ are established. The dual spaces of $H^p(\Theta )$ is described as a Campanato type space in \cite{DW}. Our objective in this paper is to introduce molecules associated with covers $\Theta$ showing that they can be represented as a sum of atoms in $H^p(\Theta )$. As it is well-known, molecular representations play an important role to study the boundedness of operators in Hardy spaces.

We now recall definitions and main results about variable anisotropy and Hardy spaces in this setting that can be found in \cite{DDP} and \cite{DPW}.

By $B$ we denote the Euclidean unit ball in $\mathbb{R}^n$. By an affine transformation $A$ in $\mathbb{R}^n$ we mean that one having the form $Ax=Mx+v$, $x\in \mathbb{R}^n$, where $M$ is a nonsingular $n\times n$ matrix and $v\in \mathbb{R}^n$. The set $\theta =A(B)$ is named the ellipsoid associated with $A$, and $v=A(0)$ is said to be the center of $\theta$.

Next we define continuous multilevel ellipsoid covers of $\mathbb{R}^n$. For every $t\in \mathbb{R}$, $\Theta _t$ is a set of ellipsoids (of $t$-level) satisfying the following conditions:

$(C1)$ For every $x\in \mathbb{R}^n$ there exist a unique ellipsoid $\theta (x,t)\in \Theta _t$ and an affine transform $A_{x,t}$ defined by $A_{x,t}(y)=M_{x,t}y+x$, $y\in \mathbb{R}^n$, such that $\theta (x,t)=A_{x,t}(B)$ and
$$
a_12^{-t}\leq |\theta (x,t)|\leq a_22^{-t}.
$$

$(C2)$ For every $x,y\in \mathbb{R}^n$, $t\in \mathbb{R}$ and $s\geq 0$, if $\theta (x,t)\cap \theta (y,t+s)\not =\emptyset$, then
$$
a_32^{-a_4s}\leq \|M_{y,t+s}^{-1}M_{x,t}\|^{-1}\leq \|M_{x,t}^{-1}M_{y,t+s}\|\leq a_52^{-a_6s}.
$$
We say that the set $\Theta =\cup_{t\in \mathbb{R}}\Theta _t$ is a continuous multilevel ellipsoid cover of $\mathbb{R}^n$.
Note that the set of parameters, which is denoted by $p(\Theta )=\{a_1,a_2,a_3,a_4,a_5,a_6\}$, does not depend on $x\in \mathbb{R}^n$ and $t\in \mathbb{R}$.

A careful reading of the proof of \cite[Lemma 2.2]{DPW} and minor modifications in it, allow us to establish the following useful property.
\begin{propo}\label{Prop3}
Assume that $\Theta $ is a continuous multilevel ellipsoid cover of $\mathbb{R}^n$. There exists $J\geq (\log_2a_5)/a_6$ such that, for every $x\in \mathbb{R}^n$, $t\in\mathbb{R}$, and $s\geq J$,
$$
\theta (x,t)\subset \theta (x,t-s).
$$
\end{propo}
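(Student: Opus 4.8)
The plan is to reduce the stated set inclusion to an operator-norm estimate for the matrix $M_{x,t-s}^{-1}M_{x,t}$, and then to extract that estimate directly from condition $(C2)$.

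First I would record the elementary but essential observation that $\theta(x,t)$ and $\theta(x,t-s)$ are ellipsoids sharing the \emph{same} center $x$: since $A_{x,\tau}(0)=x$ for every $\tau\in\mathbb{R}$ and $0\in B$, we have $x\in\theta(x,t)\cap\theta(x,t-s)$. In particular this intersection is nonempty, which is precisely the hypothesis needed to invoke $(C2)$. Applying $(C2)$ with the point $x$ at level $t-s$ and the same point $x$ at level $(t-s)+s=t$ (legitimate because $s\ge 0$) yields
\[
\|M_{x,t-s}^{-1}M_{x,t}\|\le a_5\,2^{-a_6 s}.
\]

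Next I would fix $J=(\log_2 a_5)/a_6$; note that $a_5\ge 1$ (take $x=y$ and $s=0$ in $(C2)$, so that $1=\|I\|\le a_5$), hence $J\ge 0$ and $(C2)$ does apply whenever $s\ge J$. For such $s$ we obtain $a_5\,2^{-a_6 s}\le a_5\,2^{-a_6 J}=a_5\,2^{-\log_2 a_5}=1$, so $\|M_{x,t-s}^{-1}M_{x,t}\|\le 1$. Translating this back into the claimed inclusion: if $z\in\theta(x,t)$, write $z=M_{x,t}y+x$ with $y\in B$; then $z=M_{x,t-s}\bigl(M_{x,t-s}^{-1}M_{x,t}y\bigr)+x$ and $|M_{x,t-s}^{-1}M_{x,t}y|\le\|M_{x,t-s}^{-1}M_{x,t}\|\,|y|\le|y|\le 1$, whence $M_{x,t-s}^{-1}M_{x,t}y\in B$ and $z\in\theta(x,t-s)$.

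I do not anticipate a genuine obstacle here; the only points requiring care are matching the indices correctly when substituting into $(C2)$ (the two levels $t-s$ and $t$, with $x$ in both roles), checking that $J$ may indeed be taken $\ge(\log_2 a_5)/a_6$ and $\ge 0$ so that $(C2)$ is applicable, and keeping track of the convention for $B$ (whether $B$ is the open or the closed unit ball, the bound $\|M_{x,t-s}^{-1}M_{x,t}\|\le 1$ suffices for the inclusion). This is essentially the content of the ``minor modifications'' of the proof of \cite[Lemma 2.2]{DPW} alluded to above.
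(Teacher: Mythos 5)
Your argument is correct: the two ellipsoids share the center $x$, so $(C2)$ applies with levels $t-s$ and $(t-s)+s=t$ and gives $\|M_{x,t-s}^{-1}M_{x,t}\|\le a_5 2^{-a_6 s}\le 1$ for $s\ge J=(\log_2 a_5)/a_6$, which is exactly the inclusion $x+M_{x,t}(B)\subset x+M_{x,t-s}(B)$; the observation $a_5\ge 1$ (from $y=x$, $s=0$) correctly guarantees $J\ge 0$ so that $(C2)$ is applicable. The paper itself omits the proof and only refers to minor modifications of \cite[Lemma 2.2]{DPW}, and your reasoning is precisely that intended argument.
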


A quasidistance on a set $X$ is a mapping $\rho :X\times X\longrightarrow [0,\infty )$ satisfying the following conditions:

$(a)$ $\rho (x,y)=0\iff x=y$;

$(b)$ $\rho (x,y)=\rho (y,x)$, $x,y\in X$;

$(c)$ for some $K\geq 1$, $\rho (x,y)\leq K(\rho (x,z)+\rho (z,y))$, $x,y,z\in X$.

The ellipsoid cover $\Theta$ induces a quasidistance $\rho _\Theta$ in $\mathbb{R}^n$ defined as follows:
$$
\rho _\Theta (x,y)=\inf\Big\{|\theta |:  \theta\in \Theta \mbox{ and }x,y\in\theta \Big\},\quad x,y\in \mathbb{R}^n.
$$

As usual $S(\mathbb{R}^n)$ denotes the Schwartz function space on $\mathbb{R}^n$ and $S'(\mathbb{R}^n)$ represents its dual space.

If $\alpha =(\alpha _1,...,\alpha _n)\in \mathbb{N}^n$, we write $|\alpha|=\alpha _1+...+\alpha _n$. Suppose that $N,\widetilde{N}\in \mathbb{N}$, $N\leq \widetilde{N}$, $\alpha \in \mathbb{N}^n$, $|\alpha |\leq N$, and $\psi \in C^N(\mathbb{R}^n)$. We define
$$
\|\psi \|_{\alpha ,\widetilde{N}}=\sup _{y\in \mathbb{R}^n}(1+|y|)^{\widetilde{N}}|\partial ^\alpha \psi (y)|,
$$
where $\partial ^\alpha =\frac{\partial ^{|\alpha |}}{\partial x_1^{\alpha _1}...\partial x_n^{\alpha _n}}$, and
$$
\|\psi \|_{N,\widetilde{N}}=\max_{|\alpha|\leq N}\|\psi \|_{\alpha ,\widetilde{N}}.
$$
Also we consider
$$
S_{N,\widetilde{N}}=\Big\{\psi \in S(\mathbb{R}^n): \|\psi \|_{N,\widetilde{N}}\leq 1\Big\}.
$$

Let $\Theta$ be a continuous multilevel ellipsoid cover of $\mathbb{R}^n$, where $\theta (x,t)=x+M_{x,t}(B)$, for every $x\in \mathbb{R}^n$ and $t\in \mathbb{R}$. We denote by $\psi _{x,t}$, $x\in \mathbb{R}^n$ and $t\in \mathbb{R}$, the function given by
$$
\psi _{x,t}(y)=|{\rm det} (M_{x,t}^{-1})|\psi (M_{x,t}^{-1}(x-y)),\quad y\in \mathbb{R}^n.
$$
If $g\in S'(\mathbb{R}^n)$ and $\psi \in S(\mathbb{R}^n)$, the radial maximal function $M_\psi (g)$ of $g$ associated with $\Theta$ is defined by
$$
M_\psi (g)(x)=\sup_{t\in \mathbb{R}}|\langle g(y),\psi _{x,t}(y)\rangle |,\quad x\in \mathbb{R}^n,
$$
and for every $0<N\leq \widetilde{N}$, $N,\widetilde{N}\in \mathbb{N}$, the radial grandmaximal function $M_{N,\widetilde{N}}(g)$ of $g$ associated with $\Theta$ is given as follows:
$$
M_{N,\widetilde{N}}(g)(x)=\sup_{\psi \in S_{N,\widetilde{N}}}M_\psi (g)(x),\quad x\in \mathbb{R}^n.
$$

Given the set of parameters $p(\Theta)$, for every $0<p\leq 1$, we define
$$
N_p(\Theta )=\min\Big\{m\in \mathbb{N}: m>(\max\{1,a_4\}n+1) /(a_6p)\Big\},
$$
and
$$
\widetilde{N_p}(\Theta )=\min\Big\{m\in \mathbb{N}: m>(a_4N_p(\Theta )+1) /a_6\Big\}.
$$

Let $0<p\leq 1$. The Hardy space $H^p(\Theta )$ associated with $\Theta$ consists of all those distributions $g\in S'(\mathbb{R}^n)$ such that $M_{N_p,\widetilde{N_p}}(g)\in L^p(\mathbb{R}^n)$. $H^p(\Theta)$ is endowed with the quasinorm $\|\cdot \|_{H^p(\Theta )}$ defined by
$$
\|g\|_{H^p(\Theta )}=\|M_{N_p,\widetilde{N_p}}(g)\|_{L^p(\mathbb{R}^n)},\quad g\in H^p(\Theta).
$$

In \cite[Section 4]{DPW} Hardy spaces $H^p(\Theta )$ were characterized by using atomic decompositions.

Let the triplet $(p,q,m)$ admissible for $\Theta$, that is, $0<p\leq 1$, $1\leq q\leq \infty$, $p<q$, $m \in \mathbb{N}$ and $m \geq N_p(\Theta )$. A measurable function ${\mathfrak a}$ on $\mathbb{R}^n$ is a $(p,q,m)$-atom associated with $x\in \mathbb{R}^n$ and $t\in \mathbb{R}$, when the following properties are satisfied:

$(i)$ ${\rm supp }({\mathfrak a})\subset \theta (x,t)\in \Theta$.

$(ii)$ $\|{\mathfrak a}\|_{L^q(\mathbb{R}^n)}\leq |\theta (x,t)|^{1/q-1/p}$, where $1/q$ is understood as $0$ when $q=\infty$.

$(iii)$ $\int {\mathfrak a}(y)y^\alpha dy=0$, when $\alpha \in \mathbb{N}^n$ and $|\alpha |\leq m$. Here, if $\alpha =(\alpha _1,...,\alpha _n)\in \mathbb{N}^n$ and $y=(y_1,...,y_n)\in \mathbb{R}^n$, $y^\alpha =y_1^{\alpha _1}...y_n^{\alpha _n}$.

If $(p,q,m)$ is admissible for $\Theta$, the atomic Hardy space $H_{q,m }^p(\Theta )$ consists of all those distributions $g\in S'(\mathbb{R}^n)$ that can be written as $g=\sum_{j\in \mathbb{N}}\lambda _j{\mathfrak a}_j$, where the series converges in $S'(\mathbb{R}^n)$, $\{{\mathfrak a}_j\}_{j\in \mathbb{N}}$ is a sequence of $(p,q,m)$-atoms and $\{\lambda _j\}_{j\in \mathbb{N}}$ is a sequence of positive numbers such that $\sum_{j\in \mathbb{N}}\lambda _j^p<\infty $. On $H_{q,m}^p(\Theta )$ we consider the quasinorm $\|\cdot \|_{H_{q,m}^p(\Theta )}$ defined by
$$
\|g\|_{H^p_{q,m}}(\Theta )=\inf \Big(\sum_{j\in \mathbb{N}}\lambda _j^p\Big)^{1/p},
$$
where the infimum is taken over all the sequences $\{\lambda _j\}_{j\in  \mathbb{N}}$ of positive numbers such that $\sum_{j\in \mathbb{N}}\lambda _j^p<\infty $ and $g=\sum_{j\in \mathbb{N}}\lambda _j{\mathfrak a}_j$, in the sense of convergence in $S'(\mathbb{R}^n)$, for a certain sequence $\{{\mathfrak a}_j\}_{j\in \mathbb{N}}$ of $(p,q,m)$-atoms.

In \cite[Section 4]{DPW} it was proved that $H^p(\Theta )=H_{q,m}^p(\Theta )$ where the equality is understood algebraic and topologically.

In order to establish that an operator (for instance, a singular integral operator) is bounded from a Hardy space into a Lebesgue space, many times it is sufficient to use the atomic characterization of the Hardy space (see for instance \cite{Bown} and \cite{BLYZ}). However, in general, it is not true that if $T$ is a bounded operator from a Hardy space $H^p$ into $L^p$ (you can think $T$ being the Hilbert transform on ${\Bbb R}$) and ${\mathfrak a}$ is an atom in $H^p$, then $T\mathfrak{a}$ is a finite linear combination of atoms in $H^p$. Then, in order to prove the boundedness of an operator between Hardy spaces, a class of functions larger than atoms is needed. These other functions are called molecules and suitably controlled infinite linear combination of molecules are in the considered Hardy space.

Taibleson and Weiss (\cite{TW1} and \cite{TW2}) established the first results about molecular decomposition for distributions in Hardy spaces. For weighted Hardy spaces, molecular characterizations were obtain in \cite{LL} and \cite {LP}. Recently, molecular representations of distributions in the Bownik anisotropic Hardy spaces have been established (\cite{Wan}, in the unweighted case, and \cite {ZL}, in the weighted case).

We now introduce molecules in our particular anisotropic setting.

Let $\Theta$ be a continuous multilevel ellipsoid cover of $\mathbb{R}^n$ and $p(\Theta)$ the associated set of parameters. Consider $0<p\leq 1\leq q$, $p<q$, $m\in \mathbb{N}$, and
$$
d>\max\Big \{a_4\big(m+n-\frac{n}{q}\big), \frac{a_4}{a_6}\big(1-\frac{1}{q}+ma_4\big), \frac{a_4}{a_6}\big(\frac{1}{p}-\frac{1}{q}+m(a_4-a_6)\big)\Big\}.
$$

We say that a function $b\in L^q(\mathbb{R}^n)$ is a $(p,q,m,d)$-molecule centered in $x_0\in \mathbb{R}^n$ when the following properties hold:

$(M1)$ $\int b(x)x^\alpha dx=0$, for every $\alpha \in \mathbb{N}^n$, $|\alpha |\leq m$.

$(M2)$ $b\rho (x_0,\cdot  )^d\in L^q(\mathbb{R}^n)$.

For such functions $b$, we define the $(p,q,m,d)$-molecular "norm" $M_{p,q,m,d}(b)$ by
$$
M_{p,q,m,d}(b)=\big\|b\rho ( x_0,\cdot)^d\big\|_{L^q(\mathbb{R}^n)}^\sigma \Big(\big\|b\big\|_{L^q(\mathbb{R}^n)}^{(1-\sigma )\alpha _1}+\big\|b\big\|_{L^q(\mathbb{R}^n)}^{(1-\sigma )\alpha _2}\Big),
$$
where
\begin{align*}
&\sigma =\frac{1}{d}\Big(\frac{1}{p}-\frac{1}{q}\Big),&\\
&\alpha _1=\frac{1}{d(1-\sigma)}\Big(\frac{1}{q}-\frac{1}{p}+\frac{da_6}{a_4}-m(a_4-a_6)\Big),\\
&\alpha _2=\frac{1}{d(1-\sigma )}\Big(\frac{1}{q}-\frac{1}{p}+\frac{da_4}{a_6}+m(a_4-a_6)\Big).
\end{align*}

Note that in the isotropic case $a_4=a_6=1$, and then, $\alpha _1=\alpha _2=1$. In this case our molecules and the molecular "norm" are the classical ones.

In order to describe our molecules as suitable sums of multiple of atoms in $H^p(\Theta)$, we need that the ellipsoids of zero-level are equivalent in shape. We say that a continuous multilevel ellipsoid cover of $\mathbb{R}^n$ is quasi zero uniform, when, there exist positive constants $c_1$ and $c_2$ such that
$$
c_1\leq \big\|M_\theta ^{-1}M_{\theta '}\big\|\leq c_2, \quad \theta, \theta '\in \Theta_0.
$$
Here $M_\theta $ and $M_{\theta '}$ have the obvious mean. Quasi zero uniform covers of ${\Bbb R}^n$ were also considered in \cite{De}.

Our main result which will be proved in next section is the following.

\begin{teo}\label{Th1}
Let $\Theta$ be a quasi zero uniform continuous multilevel ellipsoid cover of $\mathbb{R}^n$. Assume that $(p,q,m)$ is admissible for $\Theta$ and that
$$
d>\max\Big \{a_4\big(m+n-\frac{n}{q}\big), \frac{a_4}{a_6}\big(1-\frac{1}{q}+ma_4\big), \frac{a_4}{a_6}\big(\frac{1}{p}-\frac{1}{q}+m(a_4-a_6)\big)\Big\}.
$$
Then, for every $(p,q,m,d)$-molecule $b$, there exist a sequence $\{{\mathfrak a}_j\}_{j\in\Bbb N}$ of $(p,q,m)$-atoms and a sequence $\{\lambda_j\}_{j\in\Bbb N}\subset (0,\infty)$ with $\sum_{j\in\Bbb N}\lambda_j^p\leq CM_{p,q,m,d}(b)$, where $C>0$ does not depend on $b$, verifying that
$$b=\sum_{j\in\Bbb N}\lambda_j{\mathfrak a}_j, \;\;\;\;\mbox{in}\;S'({\Bbb R}^n).$$
\end{teo}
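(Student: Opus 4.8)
The plan is to adapt to the present variable--anisotropic geometry the classical Taibleson--Weiss scheme for writing molecules as sums of atoms. Fix a $(p,q,m,d)$-molecule $b$ centered at $x_0$, and set $A=\|b\,\rho_\Theta(x_0,\cdot)^d\|_{L^q(\mathbb R^n)}$, $\mathcal B=\|b\|_{L^q(\mathbb R^n)}$, so that $M_{p,q,m,d}(b)=A^{\sigma}(\mathcal B^{(1-\sigma)\alpha_1}+\mathcal B^{(1-\sigma)\alpha_2})$. Using Proposition~\ref{Prop3}, fix $J$ so that the ellipsoids $\theta_k:=\theta(x_0,t_0-kJ)$, $k\in\mathbb N$, form an increasing sequence; since $\rho_\Theta$ is a finite--valued quasidistance one also has $\bigcup_k\theta_k=\mathbb R^n$. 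Here $t_0\in\mathbb R$ is kept as a free parameter, to be chosen at the very end. Put $E_0=\theta_0$ and $E_k=\theta_k\setminus\theta_{k-1}$ for $k\ge1$, so that $b=\sum_{k\ge0}b\chi_{E_k}$ in $L^q(\mathbb R^n)$.

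Next we perform a telescoping correction. For each $k$ let $Q_k$ be the unique polynomial of degree $\le m$, supported in $\theta_k$, with $\int_{\theta_k}Q_k(y)(y-x_0)^\beta\,dy=\int_{\mathbb R^n\setminus\theta_k}b(z)(z-x_0)^\beta\,dz$ for all $|\beta|\le m$; this is solvable since $\{(y-x_0)^\beta\}_{|\beta|\le m}$ is a basis of the polynomials of degree $\le m$, and working with the shifted monomials $(y-x_0)^\beta$ rather than $y^\beta$ is what will keep the later estimates free of $|x_0|$ (it does not affect the moment conditions, which depend only on the underlying space of polynomials). Set $h_0=b\chi_{\theta_0}+Q_0$ and $h_k=b\chi_{E_k}+Q_k-Q_{k-1}$ for $k\ge1$. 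The vanishing moments of $b$ give immediately that each $h_k$ is supported in $\theta_k$ and has vanishing moments up to order $m$, and the partial sums satisfy $\sum_{k=0}^N h_k=b\chi_{\theta_N}+Q_N$. Setting $\lambda_k:=\|h_k\|_{L^q}\,|\theta_k|^{1/p-1/q}$ and $\mathfrak a_k:=\lambda_k^{-1}h_k$, each $\mathfrak a_k$ is a $(p,q,m)$-atom associated with $\theta_k$, and $b=\sum_k\lambda_k\mathfrak a_k$ provided convergence in $S'(\mathbb R^n)$ is justified.

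The analytic heart is the estimate of $\lambda_k$, that is, of $\|h_k\|_{L^q}$. For $k\ge1$, every $z\in E_k$ lies outside $\theta_{k-1}$, and a quantitative form of Proposition~\ref{Prop3} --- small ellipsoids through $x_0$ are contained in $\theta(x_0,\cdot)$, which follows from $(C1)$--$(C2)$ --- yields $\rho_\Theta(x_0,z)\gtrsim|\theta_{k-1}|\asymp|\theta_k|$; hence $\|b\chi_{E_k}\|_{L^q}\lesssim|\theta_k|^{-d}\|b\,\rho_\Theta(x_0,\cdot)^d\chi_{E_k}\|_{L^q}$, while trivially $\|b\chi_{\theta_0}\|_{L^q}\le\mathcal B$. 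For the polynomial part, passing to the coordinates $u=M_{x_0,t_k}^{-1}(y-x_0)$ turns $Q_k$ into a polynomial of degree $\le m$ on the fixed ball $B$, and estimating its coefficients through the prescribed shifted moments gives $\|Q_k\|_{L^q}\lesssim|\theta_k|^{1/q-1}\,\|M_{x_0,t_k}^{-1}\|^{m}\max_{|\beta|\le m}\big|\int_{\mathbb R^n\setminus\theta_k}b(z)(z-x_0)^\beta\,dz\big|$; the last maximum is controlled by combining the annular bounds above over the $E_j$, $j>k$, with the Euclidean size estimate $\operatorname{diam}\theta_j\lesssim 1+|\theta_j|^{a_4}$, again a consequence of $(C1)$--$(C2)$. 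The point not present in the isotropic case is that $\|M_{x_0,t_k}^{-1}\|$ and $\operatorname{diam}\theta_j$ are controlled only by powers of $|\theta_j|$ whose exponents are governed by $a_4$ and $a_6$ (and range between them), depending on how eccentric the ellipsoid is, and the two extreme powers are exactly what produce the two exponents $\alpha_1$ and $\alpha_2$. The hypothesis on $d$ enters precisely here: the three lower bounds on $d$ are what make the resulting geometric series in $k$ (and the tail series in $j$) converge, equivalently what make $\alpha_1,\alpha_2>0$. Quasi zero uniformity is used to guarantee $\|M_{x,0}\|\asymp1$ uniformly in $x\in\mathbb R^n$ --- combine $c_1\le\|M_\theta^{-1}M_{\theta'}\|\le c_2$ on $\Theta_0$ with $a_1\le|\theta(x,0)|\le a_2$ --- so that all implicit constants, and hence $C$, are independent of $x_0$ and of $b$.

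Finally we assemble the pieces. Summing the bounds for $\lambda_k^p$ (using $0<p\le1$ together with $\ell^q$--summation of the annular norms, since $\sum_k\|b\,\rho_\Theta(x_0,\cdot)^d\chi_{E_k}\|_{L^q}^q\le A^q$, with the obvious modification when $q=\infty$), and separating the indices with $|\theta_k|\le1$ from those with $|\theta_k|\ge1$, one obtains an estimate of the form $C\big(\mathcal B^{p}|\theta_0|^{a}+A^{p}|\theta_0|^{b_1}+A^{p}|\theta_0|^{b_2}\big)$ with $a>0$ and $b_1,b_2<0$ depending only on $p,q,m$ and $p(\Theta)$; optimizing over the base level $t_0$, equivalently over $|\theta_0|$, then yields $\sum_k\lambda_k^p\le C\,M_{p,q,m,d}(b)$, the two regimes $|\theta_0|$ small and $|\theta_0|$ large producing respectively the two summands of $M_{p,q,m,d}$. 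Convergence of $\sum_k\lambda_k\mathfrak a_k=b$ in $S'(\mathbb R^n)$ then follows from $\sum_k\lambda_k^p<\infty$ and the identity $\sum_{k\le N}h_k=b\chi_{\theta_N}+Q_N$: indeed $b\chi_{\theta_N}\to b$ in $L^q$, while $\|Q_N\|_{L^1}\to0$ by the estimates above. I expect the main obstacle to be exactly this bookkeeping in the estimate of $\|Q_k\|_{L^q}$: controlling the eccentricity of the $\theta_k$ uniformly and tracking, factor by factor, which power of $|\theta_j|$ between the $a_4$-- and $a_6$--governed extremes is in force, so that the exponents produced by the final balancing are precisely $\sigma$, $\alpha_1$ and $\alpha_2$ and not something weaker.
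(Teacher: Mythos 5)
Your proposal is essentially the paper's own argument: truncate $b$ over the nested concentric ellipsoids $\theta(x_0,t_0-kJ)$ furnished by Proposition \ref{Prop3}, correct each truncation by the unique degree-$m$ moment-matching polynomial, telescope (your $h_k$ are exactly the differences $W_{m,j+1}-W_{m,j}$ of the paper, up to the sign convention for the correcting polynomial), and estimate the annular and polynomial parts via $(M2)$, the tail moment integrals, and Proposition \ref{Prop2}. The only differences are presentational — you keep the base level $t_0$ free and balance at the end, whereas the paper normalizes $M_{p,q,m,d}(b)=1$ and fixes the level $r$ by $2^{-r}\asymp\|b\|_{L^q}^{1/(1/q-1/p)}$ up front, and your lower bound $\rho_\Theta(x_0,z)\gtrsim|\theta_{k-1}|$ off $\theta_{k-1}$ (a correct engulfing consequence of $(C1)$--$(C2)$, in fact slightly sharper than the bound of Corollary \ref{coro} used in the paper) would need the short proof you indicate.
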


As an immediate consequence of Theorem \ref{Th1} and the fact that $H_{q,m}^p(\Theta )\subset H^p(\Theta )$ we obtain the following result.

\begin{coro}\label{coro1.3}
Let $\Theta$ be a quasi zero uniform continuous multilevel ellipsoid cover of $\mathbb{R}^n$ and let $(p,q,m)$ be admissible for $\Theta$. Assume that
$$
d>\max\Big \{a_4\big(m+n-\frac{n}{q}\big), \frac{a_4}{a_6}\big(1-\frac{1}{q}+ma_4\big), \frac{a_4}{a_6}\big(\frac{1}{p}-\frac{1}{q}+m(a_4-a_6)\big)\Big\}.
$$
and that $(b_j)_{j\in\Bbb N}$ is a sequence of $(p,q,m,d)$-molecules such that, for a certain $C>0$, $M_{p,q,m,d}(b_j)\leq C$, $j\in\Bbb N$, and that  $\{\lambda_j\}_{j\in\Bbb N}$ is a sequence of complex number satisfying that $\sum_{j\in\Bbb N}|\lambda_j|^p<\infty$. Then, the series $\sum_{j\in\Bbb N}\lambda_jb_j$ converges in $H^p(\Theta )$ and
$\|\sum_{j\in\Bbb N}\lambda_jb_j\|_{H^p(\Theta )}\leq C(\sum_{j\in\Bbb N}|\lambda_j|^p)^{1/p}$.
\end{coro}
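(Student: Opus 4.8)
The plan is to deduce the statement from Theorem~\ref{Th1} together with the atomic description of $H^p(\Theta)$ recalled above; no new analytic difficulty is involved, only the bookkeeping of $p$-summable families of atoms. First I would apply Theorem~\ref{Th1} to each molecule $b_j$. Since $M_{p,q,m,d}(b_j)\leq C$ for every $j\in\mathbb{N}$, this provides, for each $j$, a sequence $\{\mathfrak{a}_{j,k}\}_{k\in\mathbb{N}}$ of $(p,q,m)$-atoms and positive numbers $\{\lambda_{j,k}\}_{k\in\mathbb{N}}$ such that
$$
b_j=\sum_{k\in\mathbb{N}}\lambda_{j,k}\,\mathfrak{a}_{j,k}\quad\text{in }S'(\mathbb{R}^n),\qquad
\sum_{k\in\mathbb{N}}\lambda_{j,k}^{\,p}\leq C\,M_{p,q,m,d}(b_j)\leq C^2,
$$
the constant being independent of $j$. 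Writing $\lambda_j=|\lambda_j|\,\omega_j$ with $|\omega_j|=1$, and noting that $\omega_j\mathfrak{a}_{j,k}$ is again a $(p,q,m)$-atom supported in the same ellipsoid (if atoms are required to be real-valued, one splits $\omega_j\mathfrak{a}_{j,k}$ into real and imaginary parts, at the cost of a harmless fixed factor), I would relabel the countable index set $\{(j,k):j,k\in\mathbb{N}\}$ as $\mathbb{N}$ to obtain $(p,q,m)$-atoms $\{\widetilde{\mathfrak{a}}_\ell\}_{\ell\in\mathbb{N}}$ and positive numbers $\{\mu_\ell\}_{\ell\in\mathbb{N}}$ for which $\sum_{\ell}\mu_\ell\widetilde{\mathfrak{a}}_\ell$ is a rearrangement of the iterated series $\sum_j\lambda_j\sum_k\lambda_{j,k}\mathfrak{a}_{j,k}$ and
$$
\sum_{\ell\in\mathbb{N}}\mu_\ell^{\,p}=\sum_{j\in\mathbb{N}}|\lambda_j|^p\sum_{k\in\mathbb{N}}\lambda_{j,k}^{\,p}\leq C^2\sum_{j\in\mathbb{N}}|\lambda_j|^p<\infty .
$$

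Next I would use the continuous inclusion $H^p_{q,m}(\Theta)\subset H^p(\Theta)$ from \cite[Section~4]{DPW}. Because $\{\mu_\ell\}_{\ell}$ is $p$-summable, the atomic series $\sum_{\ell}\mu_\ell\widetilde{\mathfrak{a}}_\ell$ converges in $S'(\mathbb{R}^n)$ to a distribution $g\in H^p(\Theta)$ with $\|g\|_{H^p(\Theta)}^{\,p}\leq C_0\sum_{\ell}\mu_\ell^{\,p}\leq C_0C^2\sum_j|\lambda_j|^p$. To identify $g$, I would test against $\varphi\in S(\mathbb{R}^n)$: since atoms are uniformly bounded in $S'(\mathbb{R}^n)$ against a fixed Schwartz function (a consequence of the uniform $H^p(\Theta)$-bound for atoms and of the continuous embedding $H^p(\Theta)\hookrightarrow S'(\mathbb{R}^n)$) and since $\sum_j|\lambda_j|\sum_k\lambda_{j,k}<\infty$ by the standard $\ell^p\hookrightarrow\ell^1$ estimate valid for $0<p\leq1$, the double series $\sum_{j,k}\lambda_j\lambda_{j,k}\langle\mathfrak{a}_{j,k},\varphi\rangle$ converges absolutely; rearranging it and using $b_j=\sum_k\lambda_{j,k}\mathfrak{a}_{j,k}$ in $S'(\mathbb{R}^n)$ yields $\langle g,\varphi\rangle=\sum_j\lambda_j\langle b_j,\varphi\rangle$. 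Hence $g=\sum_j\lambda_jb_j$ in $S'(\mathbb{R}^n)$, and $\sum_{j\leq J}\lambda_jb_j\to g$ in $S'(\mathbb{R}^n)$ as $J\to\infty$. Finally, for each $J$ the distribution $g-\sum_{j\leq J}\lambda_jb_j$ coincides in $S'(\mathbb{R}^n)$ with the atomic series formed by the indices $j>J$, whose coefficients have $p$-th powers summing to at most $C^2\sum_{j>J}|\lambda_j|^p$; applying $H^p_{q,m}(\Theta)\subset H^p(\Theta)$ once more gives
$$
\Big\|g-\sum_{j\leq J}\lambda_jb_j\Big\|_{H^p(\Theta)}^{\,p}\leq C_0C^2\sum_{j>J}|\lambda_j|^p\longrightarrow 0\quad(J\to\infty),
$$
so $\sum_j\lambda_jb_j$ converges to $g$ in $H^p(\Theta)$, and the norm estimate $\big\|\sum_j\lambda_jb_j\big\|_{H^p(\Theta)}\leq C(\sum_j|\lambda_j|^p)^{1/p}$ follows from the bound on $\|g\|_{H^p(\Theta)}$.

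I do not expect a genuine obstacle here, since all the analytic content is carried by Theorem~\ref{Th1}. The points deserving a little care are: the absorption of the complex phases $\omega_j$ into the atoms; the reindexing of the doubly-indexed family $\{\mathfrak{a}_{j,k}\}$ as a single sequence together with the interchange of the two summations in $S'(\mathbb{R}^n)$, which is legitimate because, after testing against a Schwartz function, the resulting numerical double series converges absolutely; and the repeated use of the inclusion $H^p_{q,m}(\Theta)\subset H^p(\Theta)$ in combination with the $p$-subadditivity of $\|\cdot\|_{H^p(\Theta)}^{\,p}$.
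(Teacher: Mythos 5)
Your argument is correct and follows exactly the route the paper intends: the paper gives no separate proof, deriving the corollary immediately from Theorem \ref{Th1} together with the (continuous) inclusion $H^p_{q,m}(\Theta)\subset H^p(\Theta)$, which is precisely what you carry out, with the reindexing, phase absorption, and tail estimate filled in. The details you supply (absolute convergence after pairing with a Schwartz function via the uniform $H^p$-bound on atoms, and $p$-subadditivity for the tails) are the standard ones and are sound.
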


Throughout this paper by $C$ we always denote a positive constant that can change in each ocurrence.

\section{Proof of Theorem \ref{Th1}}
Let us denote by $\lambda$ and $\widetilde{\lambda}$ the functions on $\mathbb{R}$ defined by
$$
\lambda (t)=\left\{\begin{array}{ll}
			a_6t,& t\geq 0,\\
			a_4t,&t<0,
		         \end{array}
\right.
\quad \mbox{ and }\quad
\widetilde{\lambda} (t)=\left\{\begin{array}{ll}
			a_4t,& t\geq 0,\\
			a_6t,&t<0.
		         \end{array}
\right.
$$
The following results concerning quasi zero uniform continuous multilevel ellipsoid covers of $\mathbb{R}^n$ will be very useful.

\begin{propo}\label{Prop1}
Assume that $\Theta $ is a quasi zero uniform continuous multilevel ellipsoid cover of $\mathbb{R}^n$. Let $x_0\in \mathbb{R}^n$ and $t_0\in \mathbb{R}$, and $\theta (x_0,t_0)\in  \Theta$. Then, there exists $C>0$ such that

(a) $|z-x|\leq C2^{-\lambda (t_0)}$, $z,x\in \theta (x_0,t_0)$, and

(b)  $|z-x_0|\geq C2^{-\widetilde{\lambda }(t_0)}$, $z\not\in \theta (x_0,t_0)$.
\end{propo}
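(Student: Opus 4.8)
The plan is to deduce both statements from two-sided bounds for the operator norms of $M_{x_0,t_0}$ and of $M_{x_0,t_0}^{-1}$, measured against powers of $2$. Concretely, I will show that there is $C>0$, depending only on $p(\Theta)$, $c_1$ and $c_2$, such that
$\|M_{x_0,t_0}\|\leq C2^{-\lambda(t_0)}$ and $\|M_{x_0,t_0}^{-1}\|\leq C2^{\widetilde{\lambda}(t_0)}$; both assertions follow at once. Indeed, if $z,x\in\theta(x_0,t_0)$ then $z-x=M_{x_0,t_0}u$ with $|u|\leq2$, so $|z-x|\leq 2\|M_{x_0,t_0}\|$, which gives (a); and if $z\notin\theta(x_0,t_0)$ then $|M_{x_0,t_0}^{-1}(z-x_0)|>1$, hence $|z-x_0|\geq\|M_{x_0,t_0}^{-1}\|^{-1}\geq C^{-1}2^{-\widetilde{\lambda}(t_0)}$, which gives (b).

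The first ingredient I would isolate is uniform control of the zero-level matrices. Fix the reference ellipsoid $\theta(0,0)=M_{0,0}(B)\in\Theta_0$. For any $x\in\mathbb{R}^n$ one has $\theta(x,0)\in\Theta_0$, so quasi zero uniformity gives $\|M_{0,0}^{-1}M_{x,0}\|\leq c_2$ and $\|M_{x,0}^{-1}M_{0,0}\|\leq c_2$; writing $M_{x,0}=M_{0,0}(M_{0,0}^{-1}M_{x,0})$ and $M_{x,0}^{-1}=(M_{x,0}^{-1}M_{0,0})M_{0,0}^{-1}$ and using submultiplicativity of the operator norm, one obtains a constant $C_0>0$, independent of $x$, with $\|M_{x,0}\|\leq C_0$ and $\|M_{x,0}^{-1}\|\leq C_0$.

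The second ingredient is $(C2)$ applied to the pair of levels $\{0,t_0\}$ with base point $x_0$. Since $x_0\in\theta(x_0,0)\cap\theta(x_0,t_0)$, this intersection is nonempty. If $t_0\geq0$, take $t=0$ and $s=t_0$ in $(C2)$: this yields $\|M_{x_0,0}^{-1}M_{x_0,t_0}\|\leq a_5 2^{-a_6t_0}$ and $\|M_{x_0,t_0}^{-1}M_{x_0,0}\|\leq a_3^{-1}2^{a_4t_0}$. If $t_0<0$, take $t=t_0$ and $s=-t_0$ in $(C2)$: this yields $\|M_{x_0,0}^{-1}M_{x_0,t_0}\|\leq a_3^{-1}2^{-a_4t_0}$ and $\|M_{x_0,t_0}^{-1}M_{x_0,0}\|\leq a_5 2^{a_6t_0}$. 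Then I combine $\|M_{x_0,t_0}\|\leq\|M_{x_0,0}\|\,\|M_{x_0,0}^{-1}M_{x_0,t_0}\|\leq C_0\,\|M_{x_0,0}^{-1}M_{x_0,t_0}\|$ and $\|M_{x_0,t_0}^{-1}\|\leq\|M_{x_0,t_0}^{-1}M_{x_0,0}\|\,\|M_{x_0,0}^{-1}\|\leq C_0\,\|M_{x_0,t_0}^{-1}M_{x_0,0}\|$. In the case $t_0\geq0$ the resulting exponents are $-a_6t_0=-\lambda(t_0)$ and $a_4t_0=\widetilde{\lambda}(t_0)$; in the case $t_0<0$ they are $-a_4t_0=-\lambda(t_0)$ and $a_6t_0=\widetilde{\lambda}(t_0)$. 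Either way we recover exactly the two norm bounds stated above, and hence (a) and (b).

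I do not expect a genuine obstacle: the argument is essentially bookkeeping, the one substantive point being that quasi zero uniformity is precisely what lets us absorb $\|M_{x_0,0}\|$ and $\|M_{x_0,0}^{-1}\|$ into the constant uniformly in $x_0$ (without it only $|\det M_{x_0,0}|$ is controlled, not the operator norms, so the ellipsoids could degenerate). The only thing to watch is keeping the two branches of the definitions of $\lambda$ and $\widetilde{\lambda}$ aligned with the two branches of $(C2)$ — the sign of $t_0$ decides whether the relevant gap factor is $2^{-a_6 s}$ or $2^{-a_4 s}$ — which is exactly why $\lambda$ and $\widetilde{\lambda}$ are defined with $a_4$ and $a_6$ interchanged across $t=0$.
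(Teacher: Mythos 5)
Your argument is correct: the applications of $(C2)$ with the concentric pair $\theta(x_0,0)$, $\theta(x_0,t_0)$ (whose intersection is automatic since both contain $x_0$), the use of quasi zero uniformity to bound $\|M_{x,0}\|$ and $\|M_{x,0}^{-1}\|$ uniformly, and the reductions $|z-x|\leq 2\|M_{x_0,t_0}\|$ and $|z-x_0|\geq \|M_{x_0,t_0}^{-1}\|^{-1}$ all check out, with the exponents landing on $\lambda(t_0)$ and $\widetilde{\lambda}(t_0)$ exactly as you track them. The route is the same circle of ideas as the paper's but organized differently: the paper argues in the geometric language of ${\rm diam}(\theta)=2\|M_\theta\|$ and ${\rm width}(\theta)=2\|M_\theta^{-1}\|^{-1}$, handles $t_0\geq 0$ by citing the bounds $2^{-ta_4}/C\leq {\rm width}(\theta)\leq {\rm diam}(\theta)\leq C2^{-ta_6}$ from the proof of Theorem 4.8 in Dekel's paper \cite{De}, and only gives the direct $(C2)$ plus quasi-zero-uniformity computation for $t_0<0$ (with an arbitrary intersecting $\theta'\in\Theta_0$ rather than the concentric one). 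What you have written instead is, in effect, a self-contained proof of the paper's Proposition \ref{Prop2} (which the paper only sketches in one line), from which Proposition \ref{Prop1} follows at once; this buys a unified treatment of both signs of $t_0$ and removes the external citation, at the cost of restating in matrix-norm form what \cite{De} phrases geometrically. Either way the substantive input is the same: quasi zero uniformity is what upgrades the volume normalization of $(C1)$ to operator-norm control at level zero, and $(C2)$ transports that control to level $t_0$.
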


\begin{proof}
As in \cite{De}, if $\Omega $ is a convex bounded domain in $\mathbb{R}^n$ we denote by ${\rm width }(\Omega)$ the diameter of the largest $n$-dimensional Euclidean ball that is contained in $\Omega$.

In the proof of \cite[Theorem 4.8]{De} it was established that if $\theta \in \Theta _t$ with $t\geq 0$, then
$$
\frac{2^{-ta_4}}{C}\leq {\rm width }(\theta )\leq {\rm diam }(\theta )\leq C2^{-ta_6},
$$
for a certain $C>0$.

Suppose now that $\theta \in \Theta _t$ with $t<0$. We choose $\theta '\in \Theta _0$ such that $\theta '\cap \theta \not =\emptyset$. According to the property $(C2)$ and by using that $\Theta$ is quasi zero uniform we deduce that, for certain $C>0$,
$$
{\rm diam}(\theta)\leq 2\|M_\theta\|=2\|M_{\theta '}M_{\theta '}^{-1}M_\theta\|\leq 2\|M_{\theta '}\|\|M_{\theta '}^{-1}M_\theta\|\leq C2^{-a_4t},
$$
and
$$
{\rm width }(\theta)\geq \frac{1}{2}\|M_\theta ^{-1}\|^{-1}=\frac{1}{2}\|M_\theta ^{-1}M_{\theta '}M_{\theta '}^{-1}\|^{-1}\geq \frac{1}{2}\|M_\theta ^{-1}M_{\theta '}\|^{-1}\|M_{\theta '}^{-1}\|^{-1}\geq \frac{2^{-a_6t}}{C}.
$$

In order to see $(a)$ it is sufficient to note that $|z-x|\leq {\rm diam} (\theta (x_0,t_0))$, provided that $z,x\in \theta (x_0,t_0)$. Property $(b)$ follows because $|z-x_0|\geq {\rm width} (\theta (x_0,t_0))$ when $z\not \in \theta (x_0,t_0)$.
\end{proof}

\begin{propo}\label{Prop2}
Assume that $\Theta$ is a quasi zero uniform continuous multilevel ellipsoid cover of $\mathbb{R}^n$. Then, there exists $C>0$ such that, for every $x\in \mathbb{R}^n$,
$$
\|M_{x,t}\|\leq C2^{-\lambda (t)}
\quad \mbox{ and }\quad
\|M_{x,t}^{-1}\|\leq C2^{\widetilde{\lambda }(t)}.
$$
\end{propo}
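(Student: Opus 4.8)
The plan is to deduce Proposition~\ref{Prop2} directly from Proposition~\ref{Prop1}, using only the elementary dictionary between the Euclidean geometry of an ellipsoid and the singular values of the matrix that generates it. For $\theta(x,t)=x+M_{x,t}(B)$ the operator norm $\|M_{x,t}\|$ is the largest singular value of $M_{x,t}$, so it equals $\sup\{|z-x|:z\in\theta(x,t)\}$, i.e. half the diameter of $\theta(x,t)$; and $\|M_{x,t}^{-1}\|^{-1}$ is the smallest singular value of $M_{x,t}$, so it equals the inradius of $\theta(x,t)$ about its center $x$, i.e. the largest $r>0$ with $B(x,r)\subset\theta(x,t)$.

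First I would prove the bound on $\|M_{x,t}\|$. Fix $x\in\mathbb{R}^n$, $t\in\mathbb{R}$, and a unit vector $u$. Both the center $x$ and the point $z:=x+M_{x,t}u$ belong to $\theta(x,t)$, so Proposition~\ref{Prop1}(a) applied with $x_0=x$, $t_0=t$ gives $|M_{x,t}u|=|z-x|\le C2^{-\lambda(t)}$, with $C$ independent of $x$, $t$ and $u$. Taking the supremum over $|u|\le 1$ yields $\|M_{x,t}\|\le C2^{-\lambda(t)}$.

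Next I would prove the bound on $\|M_{x,t}^{-1}\|$. Reading Proposition~\ref{Prop1}(b) in contrapositive form, there is $c>0$, independent of $x$ and $t$, such that every $z$ with $|z-x|<c2^{-\widetilde{\lambda}(t)}$ lies in $\theta(x,t)$; that is, a genuine Euclidean ball $B(x,c2^{-\widetilde{\lambda}(t)})$ is contained in $\theta(x,t)=x+M_{x,t}(B)$. Pulling this inclusion back through $M_{x,t}^{-1}$ shows $|M_{x,t}^{-1}v|\le 1$ for all $v$ with $|v|\le c2^{-\widetilde{\lambda}(t)}/2$, hence $\|M_{x,t}^{-1}\|\le (2/c)\,2^{\widetilde{\lambda}(t)}$. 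Since the definitions of $\lambda$ and $\widetilde{\lambda}$ already incorporate the sign of $t$, this simultaneously covers the cases $t\ge 0$ and $t<0$.

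I do not expect a genuine obstacle here. The delicate point in the proof of Proposition~\ref{Prop1} was the separate treatment of $t\ge 0$ and $t<0$, and that has already been done and absorbed into $\lambda$ and $\widetilde{\lambda}$. The only matter requiring a word of care is to phrase Proposition~\ref{Prop1}(b) as the inclusion of a true Euclidean ball (shrinking the constant if necessary) so that it can be transported cleanly by the linear map $M_{x,t}^{-1}$. As an alternative route, one could read the two estimates straight off the $\mathrm{width}$/$\mathrm{diam}$ bounds established inside the proof of Proposition~\ref{Prop1}, using the identities $\mathrm{diam}(\theta(x,t))=2\|M_{x,t}\|$ and $\mathrm{width}(\theta(x,t))=2\|M_{x,t}^{-1}\|^{-1}$, which give the same conclusion with the same constants.
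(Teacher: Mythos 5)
Your argument is correct, and there is no circularity in it (Proposition \ref{Prop1} is proved before and independently of Proposition \ref{Prop2}), but it is not the route the paper indicates. The paper's (very terse) proof of Proposition \ref{Prop2} is meant to be the direct matrix-norm computation already displayed in the $t<0$ part of the proof of Proposition \ref{Prop1}: compare $M_{x,t}$ with a zero-level matrix via $(C2)$, e.g. $\|M_{x,t}\|\le\|M_{x,0}\|\,\|M_{x,0}^{-1}M_{x,t}\|\le C2^{-a_6t}$ and $\|M_{x,t}^{-1}\|\le\|M_{x,t}^{-1}M_{x,0}\|\,\|M_{x,0}^{-1}\|\le C2^{a_4t}$ for $t\ge0$ (and symmetrically for $t<0$), where quasi zero uniformity is what gives uniform bounds on $\|M_{x,0}\|$ and $\|M_{x,0}^{-1}\|$. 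You instead take the statement of Proposition \ref{Prop1} as a black box and translate it back into singular-value bounds, using that $x$ and $x+M_{x,t}u$ lie in $\theta(x,t)$ for the first estimate, and the contrapositive of (b), i.e. the inclusion of the Euclidean ball $B(x,C2^{-\widetilde\lambda(t)})$ in $\theta(x,t)$, for the second; both steps are sound, and your alternative remark that $\mathrm{diam}(\theta(x,t))=2\|M_{x,t}\|$ and $\mathrm{width}(\theta(x,t))=2\|M_{x,t}^{-1}\|^{-1}$ is exactly the dictionary the paper itself uses inside the proof of Proposition \ref{Prop1}. The trade-off: your derivation is shorter and reuses what is already proved, but it leans on the constant in Proposition \ref{Prop1} being independent of $x_0$ and $t_0$ (the statement there is phrased after fixing $x_0,t_0$, though the proof and the later Corollary \ref{coro} show uniformity is intended), and, for $t\ge0$, it inherits the reliance on the width/diameter bounds quoted from the proof of \cite[Theorem 4.8]{De}; the paper's direct argument via $(C2)$ and quasi zero uniformity is self-contained and produces the norm bounds without passing through the geometric statement.
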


\begin{proof}
These results can be proved by using the property $(C2)$ and that $\Theta$ is quasi zero uniform.
\end{proof}

An immediate consequence of Proposition \ref{Prop1} is the following corollary.

\begin{coro}\label{coro}
Let $\Theta$ be a quasi zero uniform continuous multilevel ellipsoid cover of $\mathbb{R}^n$. There exists $c_0>1$  such that
\begin{equation}\label{coro(a)}
\frac{2^{-\widetilde{\lambda }(s)}}{c_0}\leq |z-x|\leq c_02^{-\lambda (t)},
\end{equation}
and
\begin{equation}\label{coro(b)}
\rho (x,z)\geq a_12^{-\lambda ^{-1}(2\log_2c_0+\widetilde{\lambda}(s))},
\end{equation}
for every $x,y,z\in \mathbb{R}^n$ and $t,s\in \mathbb{R}$ verifying that $z,x\in \theta (y,t)$ and $z\not \in \theta (x,s)$.
\end{coro}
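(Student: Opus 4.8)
The plan is to read off both estimates directly from Proposition~\ref{Prop1}, choosing $c_0>1$ large enough to absorb the finitely many constants $C$ produced there, and then for \eqref{coro(b)} to combine the two halves of \eqref{coro(a)} against condition $(C1)$.

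First I would prove \eqref{coro(a)}. Since $z,x\in\theta(y,t)$, part $(a)$ of Proposition~\ref{Prop1} applied with $x_0=y$ and $t_0=t$ gives $|z-x|\le C2^{-\lambda(t)}$. Since $z\notin\theta(x,s)$, part $(b)$ of Proposition~\ref{Prop1} applied with $x_0=x$ and $t_0=s$ gives $|z-x|\ge C2^{-\widetilde{\lambda}(s)}$. Taking $c_0>1$ at least as large as both of these constants $C$ yields \eqref{coro(a)}.

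For \eqref{coro(b)} I would argue from the definition $\rho(x,z)=\inf\{|\theta|:\theta\in\Theta,\ x,z\in\theta\}$. Fix any $\theta\in\Theta$ with $x,z\in\theta$ and write $\theta=\theta(w,r)\in\Theta_r$, where $w$ is the center of $\theta$. The upper bound in \eqref{coro(a)} applied with $y=w$, $t=r$ gives $|z-x|\le c_02^{-\lambda(r)}$, while the lower bound in \eqref{coro(a)} gives $|z-x|\ge c_0^{-1}2^{-\widetilde{\lambda}(s)}$; combining these two inequalities yields $c_0^{-1}2^{-\widetilde{\lambda}(s)}\le c_02^{-\lambda(r)}$, that is, $\lambda(r)\le\widetilde{\lambda}(s)+2\log_2 c_0$. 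Since $a_4,a_6>0$, the function $\lambda$ is a strictly increasing continuous bijection of $\mathbb{R}$, so $r\le\lambda^{-1}\big(\widetilde{\lambda}(s)+2\log_2 c_0\big)$, and then $(C1)$ gives $|\theta|\ge a_12^{-r}\ge a_12^{-\lambda^{-1}(2\log_2 c_0+\widetilde{\lambda}(s))}$. Taking the infimum over all such $\theta$ produces \eqref{coro(b)}.

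The argument is essentially routine, so I do not expect any genuine obstacle; the only point that needs mild care is the bookkeeping of the exponents and the use of the monotonicity of $\lambda$ to pass from $\lambda(r)\le\widetilde{\lambda}(s)+2\log_2 c_0$ to the bound on $r$ (and, correspondingly, on $|\theta|$).
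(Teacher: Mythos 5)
Your proof is correct and follows essentially the same route as the paper: \eqref{coro(a)} is read off from Proposition~\ref{Prop1} (diameter bound for the ellipsoid containing $x,z$, width bound for the ellipsoid from which $z$ is excluded), and \eqref{coro(b)} comes from combining the two halves of \eqref{coro(a)} to get $\lambda(r)\leq 2\log_2 c_0+\widetilde{\lambda}(s)$ for every level $r$ of an ellipsoid containing both points, then invoking $(C1)$ and the monotonicity of $\lambda$ before taking the infimum defining $\rho$. The only cosmetic point is that for the lower bound you need $c_0\geq 1/C$ rather than $c_0\geq C$, which your ``absorb the constants'' choice of $c_0$ clearly covers.
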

\begin{proof}
Estimation (\ref{coro(a)}) follows directly from Proposition \ref{Prop1}. To prove inequality (\ref{coro(b)}), let $x,z\in \mathbb{R}^n$ and $s\in \mathbb{R}$ such $z\not \in \theta (x,s)$. It is sufficient to use condition (C1) and to note that inequalities in (\ref{coro(a)}) lead to $2^{-\widetilde{\lambda }(s)}\leq c_0^22^{-\lambda (t)}$, that is,
$$
t\leq \lambda ^{-1}(2\log_2c_0+\widetilde{\lambda}(s)),
$$
for every $t\in \mathbb{R}$ for which there exists an ellipsoid $\theta \in \Theta _t$ such that $x,z\in \theta$.
\end{proof}

Next we establish a useful property about integrability for the molecules.

\begin{propo}\label{Prop4}
Suppose that $b$ is a $(p,q,m,d)$-molecule. Then,
$$
\int_{\mathbb{R}^n}|b(x)||x^\alpha |dx<\infty, \quad \alpha \in \mathbb{N}^n,\;|\alpha |\leq m.
$$
\end{propo}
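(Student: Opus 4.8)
The plan is to establish the slightly stronger estimate $\int_{\mathbb{R}^n}|b(x)|(1+|x|)^m\,dx<\infty$, which dominates $\int_{\mathbb{R}^n}|b(x)||x^\alpha|\,dx$ for every $\alpha\in\mathbb{N}^n$ with $|\alpha|\le m$, since $|x^\alpha|\le|x|^{|\alpha|}\le(1+|x|)^m$ in that range. Write $x_0$ for the center of the molecule $b$ and split $\mathbb{R}^n$ into the $\rho_\Theta$-ball $B_0=\{x\in\mathbb{R}^n:\rho(x_0,x)<1\}$ and the dyadic $\rho_\Theta$-annuli $A_k=\{x\in\mathbb{R}^n:2^{k-1}\le\rho(x_0,x)<2^k\}$, $k\ge1$. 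On $B_0$ only the membership $b\in L^q(\mathbb{R}^n)$ will be used, while on each $A_k$ we will exploit property $(M2)$, that is, $b\,\rho(x_0,\cdot)^d\in L^q(\mathbb{R}^n)$.

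The first step, and the only genuinely geometric point, is a comparison between the Euclidean distance and $\rho_\Theta$. Given $x\in\mathbb{R}^n$ and $\varepsilon>0$, by definition of $\rho_\Theta$ there are $t\in\mathbb{R}$ and an ellipsoid $\theta(y,t)\in\Theta_t$ with $x_0,x\in\theta(y,t)$ and, by $(C1)$, $a_12^{-t}\le|\theta(y,t)|\le\rho(x_0,x)+\varepsilon$. By Proposition~\ref{Prop1}(a), $|x-x_0|\le C2^{-\lambda(t)}$; moreover $2^{-\lambda(t)}\le1$ when $t\ge0$ and $2^{-\lambda(t)}=(2^{-t})^{a_4}$ when $t<0$, so that $2^{-\lambda(t)}\le\max\{1,(2^{-t})^{a_4}\}\le C(1+(2^{-t})^{a_4})$. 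Letting $\varepsilon\to0$ yields
$$|x-x_0|\le C\big(1+\rho(x_0,x)^{a_4}\big),\qquad x\in\mathbb{R}^n.$$
Consequently the $\rho_\Theta$-ball of radius $r$ centered at $x_0$ is contained in $\{x:|x-x_0|\le C(1+r^{a_4})\}$, hence has Lebesgue measure at most $C(1+r^{na_4})$; in particular $|B_0|\le C$ and $|A_k|\le C2^{kna_4}$ for $k\ge1$.

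Now let $q'$ be the exponent conjugate to $q$. On $B_0$ we have $\rho(x_0,x)<1$, so $|x-x_0|\le 2C$ and $(1+|x|)^m\le(1+|x_0|+2C)^m=:R$ there; H\"older's inequality then gives $\int_{B_0}|b(x)|(1+|x|)^m\,dx\le R\,\|b\|_{L^q(\mathbb{R}^n)}\,|B_0|^{1/q'}<\infty$. On $A_k$, $k\ge1$, we have $1\le2^{k-1}\le\rho(x_0,x)<2^k$, whence $1+\rho(x_0,x)^{a_4}\le2\rho(x_0,x)^{a_4}$, so $|x|\le|x_0|+2C\rho(x_0,x)^{a_4}\le C2^{ka_4}$, $(1+|x|)^m\le C2^{kma_4}$ and $\rho(x_0,x)^{-d}\le2^{d}2^{-kd}$; H\"older's inequality with the measure bound for $|A_k|$ then gives
$$\int_{A_k}|b(x)|(1+|x|)^m\,dx\le\big\|b\,\rho(x_0,\cdot)^d\big\|_{L^q(\mathbb{R}^n)}\,\big\|\rho(x_0,\cdot)^{-d}(1+|\cdot|)^m\big\|_{L^{q'}(A_k)}\le C\,2^{k(ma_4+na_4/q'-d)}\big\|b\,\rho(x_0,\cdot)^d\big\|_{L^q(\mathbb{R}^n)}.$$
Summing over $k\ge1$ and adding the contribution of $B_0$ completes the argument, the series converging precisely because $ma_4+na_4/q'-d<0$, i.e. $d>a_4\big(m+n(1-\tfrac1q)\big)=a_4\big(m+n-\tfrac nq\big)$, which is guaranteed by the hypothesis on $d$. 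Thus the main (and fairly mild) obstacle is the distance/measure comparison of the second step; once it is in hand, the rest is bookkeeping of a geometric series, and it is exactly the first of the three lower bounds for $d$ that is used.
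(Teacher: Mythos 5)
Your argument is correct and is essentially the paper's own: both proofs rest on the comparison $|x-x_0|\le C\rho(x_0,x)^{a_4}$ for $\rho(x_0,x)\ge 1$, use $b\in L^q(\mathbb{R}^n)$ and H\"older near $x_0$, and use $(M2)$ with H\"older away from $x_0$, the convergence coming precisely from the first lower bound $d>a_4(m+n-n/q)$. The paper does the outer estimate in one stroke, splitting at the Euclidean ball $B(x_0,C_0)$ and integrating $|x-x_0|^{(m-d/a_4)q'}$ over its complement (the condition $(m-d/a_4)q'+n<0$ is exactly your summability condition), while your dyadic $\rho$-annuli with the measure bound $|A_k|\le C2^{kna_4}$ are a slightly longer bookkeeping of the same estimate. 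The one substantive difference is the source of the distance comparison: the paper quotes \cite[Theorem 2.9]{DPW}, valid for an arbitrary continuous multilevel ellipsoid cover, whereas you derive it from Proposition \ref{Prop1}(a), whose proof for negative levels uses that $\Theta$ is quasi zero uniform. Since Proposition \ref{Prop4} is stated for a general molecule without that hypothesis, your proof as written establishes a formally weaker statement; this is harmless for the application in Theorem \ref{Th1}, where $\Theta$ is assumed quasi zero uniform, but to get the proposition in full generality you should replace the appeal to Proposition \ref{Prop1} by the cited estimate from \cite{DPW}.
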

\begin{proof}
Assume that $b$ is centered in $x_0\in \mathbb{R}^n$. According to \cite[Theorem 2.9]{DPW}, there exits  $C_0\geq 1$ such that
$|x-x_0|\leq C_0\rho (x_0,x)^{a_4}$, $x\in \mathbb{R}^n\mbox{ and }\rho (x_0,x)\geq 1$. Then,
$$
\rho (x_0,x)\geq (|x-x_0|/C_0)^{1/a_4}, \quad {\rm when }\;\;|x-x_0|\geq C_0.
$$

Let $\alpha \in \mathbb{N}^n$, with $|\alpha |\leq m$. It is clear that if $|x-x_0|\geq C_0$, then $|x^\alpha |\leq C_1|x-x_0|^m$, for a certain $C_1>0$. Thus, by using the properties of the molecules we deduce that
\begin{align*}
\int_{\mathbb{R}^n}|b(x)||x^\alpha |dx&=\left(\int_{B(x_0,C_0)}+\int_{(B(x_0,C_0))^c}\right)|b(x)||x^\alpha |dx\\
&\hspace{-2cm}\leq \|b\|_{L^q(\mathbb{R}^n)}\|x^\alpha \|_{L^{q'}(B(x_0,C_0))}+C_1\int_{(B(x_0,C_0))^c}|b(x)|\rho (x_0,x)^d\frac{|x-x_0|^m}{\rho (x_0,x)^d}dx\\
&\hspace{-2cm}\leq \|b\|_{L^q(\mathbb{R}^n)}\|x^\alpha \|_{L^{q'}(B(x_0,C_0))}+C\big\|b\rho (x_0,\cdot)^d\big\|_{L^{q}(\mathbb{R}^n)}\big\||x-x_0|^{m-d/a_4}\big\|_{L^{q'}((B(x_0,C_0))^c)}<\infty ,
\end{align*}
because $(m-d/a_4)q'+n<0$.
\end{proof}

\begin{proof}[\sc Proof of Theorem \ref{Th1}]
Suppose that $b$ is a $(p,q,m,d)$-molecule centered in $x_0\in \mathbb{R}^n$ such that $M_{p,q,m,d}(b)=1$.
We define $\gamma =\|b\|_{L^q(\mathbb{R}^n)}^{1/(1/q-1/p)}$ and consider $r\in \mathbb{Z}$ for which $2^{-r-1}<\gamma \leq 2^{-r}$.

According to Proposition \ref{Prop3}, we take $\ell \in \mathbb{N}$ (which does not depend on $x_0$) such that
\begin{equation}\label{contenido}
\theta (x_0,t)\subset \theta (x_0,t-s), \quad t\in \mathbb{R},\,s\geq \ell,
\end{equation}
and for any $j\in \mathbb{N}$ we consider the set $E_j=\theta (x_0,r-j\ell)$ and the function $b_j=b\chi _{E_j}$.

According to \cite[Lemma 2.2]{DDP}, if we denote by $\sigma_t$ the minimum semi-axes of the ellipsoid $\theta (x_0,t)$, there exist $c,C>0$ such that
$$
\sigma_{t_1}\geq C\sigma_{t_2}2^{c(t_2-t_1)}, \quad -\infty <t_1\leq t_2<+\infty.
$$
Then, $\cup_{j\in \mathbb{N}}E_j=\mathbb{R}^n$. Also, by (\ref{contenido}) we have that $E_j\subset E_{j+1}$, $j\in \mathbb{N}$.

Let $k\in \mathbb{N}$. We denote by $\mathcal{P}_{m,k}$ the linear space that consists of all the restrictions to $E_k$ of polynomials in $\mathbb{R}^n$ with degree less or equal than $m$. It is clear that $\mathcal{P}_{m,k}$ has $\{x^\alpha \chi _{E_k},\alpha \in \mathbb{N}^n,|\alpha |\leq m\}$ as a linear basis on $E_k$. According to \cite[Lemma 2.1]{Se} there exists a unique $P_{m,k}\in \mathcal{P}_{m,k}$ such that
$$
\int_{\mathbb{R}^n}(b_k(x)-P_{m,k}(x))x^\beta dx=0,\quad \beta \in \mathbb{N}^n,|\beta|\leq m.
$$
We can write
$$
P_{m,k}=\sum_{\alpha \in \mathbb{N}^n, |\alpha |\leq m}\frac{1}{|E_k|}\int_{\mathbb{R}^n}b_k(x)x^\alpha dx\;Q_{m,k;\alpha},
$$
where, for every $\alpha \in \mathbb{N}^n$, $|\alpha |\leq m$, $Q_{m,k;\alpha}$ is the unique element of $\mathcal{P}_{m,k}$ such that
$$
\int_{\mathbb{R}^n}Q_{m,k;\alpha }(x)x^\beta dx=|E_k|\delta _{\alpha ,\beta},\quad \beta \in \mathbb{N}^n,|\beta|\leq m.
$$
Here, as usual, $\delta _{\alpha ,\beta}$ denotes de Kronecker function.

Let $g\in L^1(B)$. We denote by $\Pi _{B,m}(g)$ the unique polynomial in $ \mathbb{R}^n$ with degree less or equal than $m$ such that
$$
\int_B\Pi _{B,m}(g)(x)x^\beta dx=\int_Bg(x)x^\beta dx,\quad \beta \in \mathbb{N}^n,|\beta|\leq m.
$$

We note that we can write
$$
\Pi_{B,m}(g)=\sum_{\alpha \in \mathbb{N}^n,|\alpha |\leq m}\frac{1}{|B|}\int_Bg(z)z^\alpha dz\;Q_\alpha,
$$
where, for every $\alpha \in \mathbb{N}^n$, $|\alpha |\leq m$, $Q_\alpha$ is a certain polynomial with degree less than or equal to $m$. Thus,
we can find $C_0>0$ which depends only on $m$ such that
\begin{equation}\label{Pi}
|\Pi_{B,m}(g)(x)|\leq \frac{C_0}{|B|}\int_B|g(z)|dz,\quad x\in B.
\end{equation}
If $M$ is a $n\times n$-matrix and $z\in \mathbb{R}^n$, we define $(D_Mg)(x)=g(Mx)$, $x\in \mathbb{R}^n$, and $(\tau _zg)(x)=g(x-z)$, $x\in \mathbb{R}^n$.
Since $E_k=x_0+M_{x_0,r-k\ell}(B)$,
\begin{equation}\label{M1}
P_{m,k}=\chi_{E_k}(\tau _{x_0}D_{M_{x_0,r-k\ell}^{-1}}\Pi_{B,m}D_{M_{x_0,r-k\ell }}\tau _{-x_0})(b_k).
\end{equation}
Indeed, let $\beta \in \mathbb{N}^n$, $|\beta|\leq m$. We have that
\begin{align*}
\int_{E_k}(\tau _{x_0}D_{M_{x_0,r-k\ell}^{-1}}\Pi_{B,m}D_{M_{x_0,r-k\ell }}\tau _{-x_0})(b_k)(x)x^\beta dx&\\
&\hspace{-4cm}=\int_{M_{x_0,r-k\ell }(B)}(D_{M_{x_0,r-k\ell}^{-1}}\Pi_{B,m}D_{M_{x_0,r-k\ell }}\tau _{-x_0})(b_k)(x)\tau_{-x_0}(x^\beta)dx\\
&\hspace{-4cm}=|E_k|\int_{B}(\Pi_{B,m}D_{M_{x_0,r-k\ell }}\tau _{-x_0})(b_k)(x)(D_{M_{x_0,r-k\ell}}\tau _{-x_0})(x^\beta)dx\\
&\hspace{-4cm}=|E_k|\int_{B}(D_{M_{x_0,r-k\ell }}\tau _{-x_0})(b_k)(x)(D_{M_{x_0,r-k\ell}}\tau _{-x_0})(x^\beta)dx\\
&\hspace{-4cm}=\int_{\mathbb{R}^n}b_k(x)x^\beta dx.
\end{align*}

Now, and by using (\ref{Pi}) and (\ref{M1}), we get
\begin{align}\label{Pi2}
|P_{m,k}(x)|&=|(\Pi _{B,m}D_{M_{x_0,r-k\ell}}\tau _{-x_0})(b_k)(M_{x_0,r-k\ell}^{-1}(x-x_0))|\leq \frac{C_0}{|B|}\int_B|b(x_0+M_{x_0,r-k\ell }y)|dy \nonumber\\
&=\frac{C_1}{|E_k|}\int_{E_k}|b(z)|dz,\quad x\in E_k,
\end{align}
where $C_1=C_0/|B|$.

On the other hand, we can see that $P_{m,j}\longrightarrow 0$, as $j\rightarrow \infty$, in $L^1(\mathbb{R}^n)$. To prove this, using again (\ref{M1}), for every $j\in \mathbb{N}$, we have that
\begin{align}\label{Qalpha}
P_{m,j}(x)&=(\tau _{x_0}D_{M_{x_0,r-j\ell}^{-1}}\Pi_{B,m}D_{M_{x_0,r-j\ell }}\tau _{-x_0})(b_j)(x)\nonumber\\
&=\sum_{\alpha \in \mathbb{N}^n,|\alpha |\leq m}\frac{1}{|B|}\int_B (D_{M_{x_0,r-j\ell }}\tau _{-x_0})(b_j)(z)z^\alpha dz (\tau _{x_0}D_{M_{x_0,r-j\ell }^{-1}})(Q_\alpha)(x)\nonumber\\
&=\sum_{\alpha \in \mathbb{N}^n,|\alpha |\leq m}\frac{1}{|B|}\int_B b_j(M_{x_0,r-j\ell }z+x_0)z^\alpha dz (\tau _{x_0}D_{M_{x_0,r-j\ell }^{-1}})(Q_\alpha)(x)\nonumber\\
&=\frac{1}{|B||E_j|}\sum_{\alpha \in \mathbb{N}^n,|\alpha |\leq m}\int_{E_j} b(z)(M_{x_0,r-j\ell }^{-1}(z-x_0))^\alpha dz \;Q_\alpha(M_{x_0,r-j\ell }^{-1}(x-x_0)),\quad x\in E_j.
\end{align}

Property $(M1)$ implies that, for every $\alpha \in \mathbb{N}^n$, $|\alpha |\leq m$,
\begin{equation}\label{M2}
\int_{E_j} b(z)(M_{x_0,r-j\ell }^{-1}(z-x_0))^\alpha dz =-\int_{E_j^c}b(z)(M_{x_0,r-j\ell }^{-1}(z-x_0))^\alpha dz,\quad j\in \mathbb{N}.
\end{equation}

Then, for every $j\in \mathbb{N}$, since
$$
\|Q_\alpha(M_{x_0,r-j\ell }^{-1}(\cdot-x_0))\|_{L^1(E_j)}=|E_j|\|Q_\alpha \|_{L^1(B)},\quad \alpha \in \mathbb{N}^n,|\alpha |\leq m,
$$
we obtain that
$$
\|P_{m,j}\|_{L^1(\mathbb{R}^n)}\leq \frac{1}{|B|}
\sum_{\alpha \in \mathbb{N}^n,|\alpha |\leq m}\left|\int_{E_j^c}b(z)(M_{x_0,r-j\ell }^{-1}(z-x_0))^\alpha dz\right|\|Q_\alpha \|_{L^1(B)}.
$$
Now, according to Proposition \ref{Prop2} it follows that
\begin{align*}
|(M_{x_0,r-j\ell }^{-1}(z-x_0))^\alpha|&\leq |M_{x_0,r-j\ell }^{-1}(z-x_0)|^{|\alpha|}\leq \|M_{x_0,r-j\ell }^{-1}\|^{|\alpha|}|z-x_0|^{|\alpha|}\\
&\leq C2^{\widetilde{\lambda}(r-j\ell)|\alpha|}|z-x_0|^{|\alpha |}\leq C|z-x_0|^{|\alpha |},\quad z\in \mathbb{R}^n,\,\alpha \in \mathbb{N}^n,|\alpha |\leq m,\,j\in \mathbb{N}.
\end{align*}

Thus, Proposition \ref{Prop4} allows us to deduce that
$$
\lim_{j\rightarrow \infty}\int_{E_j^c}b(z)\tau _{x_0}(D_{M_{x_0,r-j\ell}^{-1}} (z^\alpha ))dz=0,
$$
and consequently,
\begin{equation}\label{Pj}
\lim_{j\rightarrow \infty }\|P_{m,j}\|_{L^1(\mathbb{R}^n)}=0.
\end{equation}

Let us now consider the functions $W_{m,j}$, $j\in \mathbb{N}$, as follows:
$$
W_{m,j}=b_j-P_{m,j}.
$$

It can be seen that $W_{m,j}\longrightarrow b$, as $j\rightarrow \infty$, in $L^1(\mathbb{R}^n)$. Indeed, by taking into account Proposition \ref{Prop4} and that  $\cup_{j\in \mathbb{N}}E_j=\mathbb{R}^n$ and $E_j\subset E_{j+1}$, $j\in \mathbb{N}$, we deduce that
$$
\lim_{j\rightarrow +\infty}\|b_j-b\|_{L^1 (\mathbb{R}^n)}=\lim_{j\rightarrow \infty }\int_{E_j^c}|b(x)|dx=0,
$$
which jointly with (\ref{Pj}) leads to $\lim_{j\rightarrow \infty}W_{m,j}=b$ in $L^1(\mathbb{R}^n)$.

Thus, we can write
$$
b=W_{m,0}+\sum_{j\in \mathbb{N}} (W_{m,j+1}-W_{m,j}),
$$
where the series converges in $L^1(\mathbb{R}^n)$, and then also in $S'({\Bbb R}^n)$.

It is clear that $\int W_{m,j}(x)x^\alpha dx=0$, $\alpha \in \mathbb{N}^n$, $|\alpha |\leq m$, and $j\in \mathbb{N}$. Also, $W_{m,j}$, $j\in\mathbb{N}$, has compact support. Hence, $W_{m,0}$ and $W_{m,j+1}-W_{m,j}$, $j\in \mathbb{N}$, are multiple of $(p,q,m)$-atoms. We are going to show that
$$
W_{m,0}=\lambda _0{\mathfrak a}_0,
$$
and
$$
W_{m,j+1}-W_{m,j}=\lambda _j{\mathfrak a}_j,\quad j\in \mathbb{N},
$$
where $\{{\mathfrak a}_j\}_{j\in \mathbb{N}}$ is a sequence of $(p,q,m)$-atoms and $\{\lambda _j\}_{j\in \mathbb{N}}\subset (0,\infty)$ satisfies that $\sum_{j\in \mathbb{N}}\lambda_j^p\leq C$, for a certain $C>0$ that does not depend on $b$.

Note that by (\ref{Pi2}) we get
\begin{align*}
\|W_{m,0}\|_{L^q(\mathbb{R}^n)}&\leq \|b\|_{L^q(E_0)}+\|P_{m,0}\|_{L^q(E_0)}\leq  \|b\|_{L^q(E_0)}+C_1|E_0|^{-1/q'}\int_{E_0}|b(z)|dz\\
&\leq (1+C_1) \|b\|_{L^q(E_0)}\leq (1+C_1)\gamma ^{1/q-1/p}\leq (1+C_1)2^{-(r+1)(1/q-1/p)}.
\end{align*}
Then, by taking into account (C1), it follows that
$$
\|W_{m,0}\|_{L^q(\mathbb{R}^n)}\leq (1+C_1)(2a_2)^{1/p-1/q}|\theta (x_0,r)|^{1/q-1/p}.
$$
We define $\lambda _0=(1+C_1)(2a_2)^{1/p-1/q}$ and ${\mathfrak a}_0=\frac{1}{\lambda _0}W_{0,m}$. Thus it is clear that ${\mathfrak a}_0$ is a $(p,q,m)$-atom.

Let now $j\in \mathbb{N}$. We put
\begin{equation}\label{decomposition}
W_{m,j+1}-W_{m,j}=\chi _{E_{j+1}\setminus E_j}b-P_{m,j+1}+P_{m,j}.
\end{equation}

Firstly we estimate $\|\chi_{E_{j+1}\setminus E_j}b\|_{L^q(\mathbb{R}^n)}$. Let $c_0>1$ the constant which appears in Corollary \ref{coro}. By using (\ref{coro(b)}) and taking into account that $a_6\leq a_4$ ,  we get that,
\begin{align*}
\rho (x_0,z)&\geq a_12^{-\lambda ^{-1}(2\log_2c_0+\widetilde{\lambda}(r-j\ell))}
=a_1\left\{\begin{array}{ll}
2^{-(2\log_2c_0+a_6(r-j\ell))/a_4},& \displaystyle\;{\rm when }\;r-j\ell < -\frac{2\log_2c_0}{a_6},\\
2^{-(2\log_2c_0+a_6(r-j\ell))/a_6},& \;\displaystyle {\rm when }\;-\frac{2\log_2c_0}{a_6}\leq \;r-j\ell <0,\\
2^{-(2\log_2c_0+a_4(r-j\ell ))/a_6},&\;{\rm when }\;r-j\ell \geq 0,
\end{array}
\right.\\
&\geq C\left\{\begin{array}{ll}
2^{-(r-j\ell )a_6/a_4},& \;{\rm when }\;r-j\ell <0,\\
2^{-(r-j\ell )a_4/a_6},&\;{\rm when }\;r-j\ell \geq 0,
\end{array}
\right.
\end{align*}
provided that $z\in E_{j+1}\setminus E_j$. Thus, we conclude that
\begin{align}\label{chi}
\|\chi_{E_{j+1}\setminus E_j}b\|_{L^q(\mathbb{R}^n)}&= \left(\int_{E_{j+1}\setminus E_j}|b(z)\rho(x_0,z)^d|^q\frac{1}{\rho (x_0,z)^{dq}}dz\right)^{1/q}\nonumber\\
&\leq C\|b\rho (x_0,\cdot)\|_{L^q(\mathbb{R}^n)}\left\{\begin{array}{ll}
2^{d(r-j\ell )a_6/a_4},& \;{\rm when }\;r-j\ell <0,\\
2^{d(r-j\ell )a_4/a_6},&\;{\rm when }\;r-j\ell \geq 0.
\end{array}
\right.
\end{align}

Secondly we estimate $\|P_{m,j}\|_{L^q(\mathbb{R}^n)}$. By (\ref{Qalpha}) and (\ref{M2}) we get
\begin{align*}
\|P_{m,j}\|_{L^q(E_j)}&\leq \frac{1}{|B||E_j|^{1/q'}}\sum_{\alpha  \in  \mathbb{N}^n,|\alpha| \leq m}\left|\int_{E_j}b(z)(M_{x_0,r-j\ell}^{-1}(z-x_0))^\alpha dz\right|\|Q_\alpha \|_{L^q(B)}\\
&\leq \frac{C}{|E_j|^{1/q'}}\sum_{\alpha  \in  \mathbb{N}^n,|\alpha |\leq m}\left|\int_{E_j^c}b(z)(M_{x_0,r-j\ell}^{-1}(z-x_0))^\alpha dz\right|,
\end{align*}
where $C>0$ does not depend on $b$ nor on $j$.

Since $M_{x_0,r-j\ell }^{-1}(z-x_0)\not \in B$, for every $z\notin E_j$, we obtain
$$
\left|\int_{E_j^c}b(z)(M_{x_0,r-j\ell}^{-1}(z-x_0))^\alpha dz\right|\leq \int_{E_j^c}|b(z)||M_{x_0,r-j\ell}^{-1}(z-x_0)|^m dz,\quad \alpha \in \mathbb{N}^n,|\alpha |\leq m.
$$
By using Proposition \ref{Prop2} it follows that
$$
|M_{x_0,r-j\ell}^{-1}(z-x_0)|\leq \|M_{x_0,r-j\ell}^{-1}\||z-x_0|\leq C2^{\widetilde{\lambda}(r-j\ell )}|z-x_0|,\quad z\in \mathbb{R}^n.
$$
Then, we can write
\begin{align*}
\|P_{m,j}\|_{L^q(E_j)}&\leq C\frac{2^{m\widetilde{\lambda }(r-j\ell)}}{|E_j|^{1/q'}}\int_{E_j^c}|b(z)||z-x_0|^mdz\\
&\leq  C\frac{2^{m\widetilde{\lambda }(r-j\ell)}}{|E_j|^{1/q'}}\|b\rho (x_0,\cdot)\|_{L^q(\mathbb{R}^n)}\left(\int_{E_j^c}\frac{|z-x_0|^{mq'}}{\rho (x_0,z)^{dq'}}dz\right)^{1/q'},\\
&\leq  C\frac{2^{m\widetilde{\lambda }(r-j\ell)}}{|E_j|^{1/q'}}\|b\rho (x_0,\cdot)\|_{L^q(\mathbb{R}^n)}\left(\sum_{\beta \in \mathbb{N}}\int_{E_{j+\beta+1}\setminus E_{j+\beta }}\frac{|z-x_0|^{mq'}}{\rho (x_0,z)^{dq'}}dz\right)^{1/q'}.
\end{align*}

Corollary \ref{coro} allows us to see that, for every $\beta\in \mathbb{N}$ and $z\in E_{j+\beta +1}\setminus E_{j+\beta}$,
$$
\rho (x_0,z)\geq a_12^{-\lambda ^{-1}(2\log_2c_0+\widetilde{\lambda}(r-(j+\beta )\ell ))},
$$
and
$$
|z-x_0|\leq c_02^{-\lambda (r-(j+\beta+1)\ell )}.
$$

Thus we deduce that
\begin{align}\label{Abeta}
\|P_{m,j}\|_{L^q(E_j)}&\leq  C\frac{2^{m\widetilde{\lambda }(r-j\ell)}}{|E_j|^{1/q'}}\|b\rho (x_0,\cdot)\|_{L^q(\mathbb{R}^n)}\left(\sum_{\beta \in \mathbb{N}}\frac{2^{-mq'\lambda (r-(j+\beta +1)\ell)}}{2^{-dq'\lambda ^{-1}(2\log_2c_0+\widetilde{\lambda }(r-(j+\beta )\ell ))}}|E_{j+\beta+1}|\right)^{1/q'}\nonumber\\
&\leq C\|b\rho (x_0,\cdot)\|_{L^q(\mathbb{R}^n)}\left(\sum_{\beta \in \mathbb{N}}2^{mq'\widetilde{\lambda }(r-j\ell)-mq'\lambda (r-(j+\beta +1)\ell)+\beta \ell+dq'\lambda ^{-1}(2\log_2c_0+\widetilde{\lambda }(r-(j+\beta )\ell ))}\right)^{1/q'}\nonumber\\
&=: C\|b\rho (x_0,\cdot)\|_{L^q(\mathbb{R}^n)}\Big(\sum_{\beta \in \mathbb{N}}A_{r,j}(\beta)\Big)^{1/q'}.
\end{align}

Let us estimate $A_{r,j}(\beta)$, $\beta \in \mathbb{N}$. We distinguish three cases.
\begin{itemize}
\item Assume that $r-j\ell <-\frac{2\log_{2}c_0}{a_6}$. It is clear that, for every $\beta \in \mathbb{N}$, $r-(j+\beta)\ell <-\frac{2\log_{2}c_0}{a_6}$. Then, we get
\begin{align}\label{1}
A_{r,j}(\beta)&=2^{mq'a_6(r-j\ell)-mq'a_4(r-(j+\beta +1)\ell )+\beta \ell +\frac{dq'}{a_4}(2\log_2c_0+a_6(r-(j+\beta)\ell ))}\nonumber\\
&=2^{mq'a_4\ell +\frac{dq'}{a_4}2\log_2c_0}2^{(r-j\ell )[mq'(a_6-a_4)+\frac{dq'a_6}{a_4}]}
2^{\beta \ell(1+mq'a_4-\frac{dq'a_6}{a_4})},\quad \beta \in \mathbb{N}.
\end{align}

\item Suppose now that $-\frac{2\log_{2}c_0}{a_6} \leq r-j\ell<0$ and consider $\beta_0\in \mathbb{N}$ for which
\begin{equation}\label{beta0}
r-(j+\beta_0+1)\ell<\frac{-2\log_2c_0}{a_6}\leq r-(j+\beta_0)\ell.
\end{equation}
If $\beta \in \mathbb{N}$ and $\beta \geq \beta _0+1$, then $A_{r,j}(\beta )$ is as in (\ref{1}). In the case that $\beta \in \mathbb{N}$ and $\beta \leq \beta _0$, we get
$$
A_{r,j}(\beta)=2^{mq'a_4\ell +\frac{dq'}{a_6}2\log_2c_0}2^{(r-j\ell )[mq'(a_6-a_4)+dq']}2^{\beta \ell(1+mq'a_4-dq')}.
$$
Now, by taking into account that $0< a_6\leq a_4$ and $r-j\ell <0$, we deduce that
\begin{equation}\label{2}
A_{r,j}(\beta)\leq C2^{(r-j\ell )[mq'(a_6-a_4)+\frac{dq'a_6}{a_4}]}
2^{\beta \ell(1+mq'a_4-\frac{dq'a_6}{a_4})},\quad \beta\in \mathbb{N}.
\end{equation}
Here $C$ does not depend on $r$ nor on $j$.

\item In the end let us consider $r-j\ell \geq 0$. Let $\beta _1\in \mathbb{N}$ be such that
$$
r-(j+\beta_1+1)\ell<0\leq r-(j+\beta_1)\ell ,
$$
and $\beta _0\in \mathbb{N}$ verifying (\ref{beta0}). Note that $\beta_1\leq \beta_0$. We get
$$
A_{r,j}(\beta)= \left\{\begin{array}{ll}
\displaystyle 2^{mq'a_6\ell +\frac{dq'}{a_6}2\log_2c_0}2^{(r-j\ell )[mq'(a_4-a_6)+\frac{dq'a_4}{a_6}]}2^{\beta \ell(1+mq'a_6-\frac{dq'a_4}{a_6})},&\quad 0\leq \beta\leq \beta _1-1,\\
\displaystyle 2^{mq'a_4\ell+\frac{dq'}{a_6}2\log_2c_0}2^{(r-j\ell )\frac{dq'a_4}{a_6}}2^{\beta\ell(1+mq'a_4-\frac{dq'a_4}{a_6})},&\quad \beta =\beta _1,\\
\displaystyle 2^{mq'a_4\ell +\frac{dq'}{a_6}2\log_2c_0}2^{(r-j\ell )dq'}2^{\beta \ell(1+mq'a_4-dq')},&\quad \beta_1+1\leq \beta\leq \beta _0,\\
\displaystyle 2^{mq'a_4\ell +\frac{dq'}{a_4}2\log_2c_0}2^{(r-j\ell )\frac{dq'a_6}{a_4}}2^{\beta \ell(1+mq'a_4-\frac{dq'a_6}{a_4})},&\quad \beta\geq \beta _0+1.
		\end{array}
\right.
$$
Again, since $0<a_6\leq a_4$, we obtain, when $r-j\ell \geq 0$,
\begin{equation}\label{3}
A_{r,j}(\beta)\leq C2^{(r-j\ell )[mq'(a_4-a_6)+\frac{dq'a_4}{a_6}]}
2^{\beta \ell(1+mq'a_4-\frac{dq'a_6}{a_4})},\quad \beta\in \mathbb{N}.
\end{equation}

By considering (\ref{Abeta}), (\ref{2}) and (\ref{3}) we deduce that
\begin{align}\label{P}
\|P_{m,j}\|_{L^q(\mathbb{R}^n)}&\leq C\|b\rho (x_0,\cdot )\|_{L^q(\mathbb{R}^n)}\left(\sum_{\beta \in \mathbb{N}}2^{\beta \ell(1+mq'a_4-\frac{dq'a_6}{a_4})}\right)^{1/q'}\left\{\begin{array}{ll}
2^{(r-j\ell )[m(a_6-a_4)+\frac{da_6}{a_4}]},& r-j\ell <0,\\
2^{(r-j\ell )[m(a_4-a_6)+\frac{da_4}{a_6}]},&r-j\ell \geq 0.
\end{array}
\right.\nonumber\\
& \leq C\|b\rho (x_0,\cdot )\|_{L^q(\mathbb{R}^n)} \left\{\begin{array}{ll}
2^{(r-j\ell )[m(a_6-a_4)+\frac{da_6}{a_4}]},& \;{\rm when }\;r-j\ell <0,\\
2^{(r-j\ell )[m(a_4-a_6)+\frac{da_4}{a_6}]},&\;{\rm when }\;r-j\ell \geq 0,
\end{array}
\right.
\end{align}
because $d>a_4(1-1/q+ma_4)/a_6$.

\end{itemize}

And finally, by taking into account (\ref{decomposition}), (\ref{chi}) and (\ref{P}), we obtain that
$$
\|W_{m,j+1}-W_{m,j}\|_{L^q(\mathbb{R}^n)}\leq C\|b\rho (x_0,\cdot)\|_{L^q(\mathbb{R}^n)}
\left\{\begin{array}{ll}
2^{(r-j\ell )(m(a_6-a_4)+da_6/a_4)},&{\rm if }\;\;r-j\ell <0,\\
2^{(r-j\ell )(m(a_4-a_6)+da_4/a_6)},&{\rm if }\;\;r-j\ell \geq 0.
\end{array}
\right.
$$

Now, since $M_{p,q,m,d}(b)=1$,
\begin{align*}
\|b\rho (x_0,\cdot)^d\|_{L^q(\mathbb{R}^n)}&=\Big(\|b\|_{L^q(\mathbb{R}^n)}^{(1- \sigma)\alpha _1}+\|b\|_{L^q(\mathbb{R}^n)}^{(1- \sigma)\alpha _2}\Big)^{-1/\sigma}\\
&\leq \gamma ^{\frac{1-\sigma}{\sigma}(1/p-1/q)\alpha _i}\leq 2^{-r\frac{1-\sigma}{\sigma}(1/p-1/q)\alpha _i},\quad i=1,2.
\end{align*}

Then, when $r-j\ell <0$ we can write
\begin{align}\label{W1}
\|W_{m,j+1}-W_{m,j}\|_{L^q(\mathbb{R}^n)}&\leq C2^{-r\frac{1-\sigma}{\sigma}(1/p-1/q)\alpha _1+(r-j\ell )[m(a_6-a_4)+da_6/a_4+1/q-1/p]}|E_{j+1}|^{1/q-1/p}\nonumber\\
&\hspace{-1cm}\leq C2^{r[(1/q-1/p)(\frac{1-\sigma}{\sigma }\alpha _1+1)+m(a_6-a_4)+da_6/a_4]}2^{-j\ell (m(a_6-a_4)+da_6/a_4+1/q-1/p)}|E_{j+1}|^{1/q-1/p}\nonumber\\
&\hspace{-1cm}=C_12^{-j\ell (m(a_6-a_4)+da_6/a_4+1/q-1/p)}|E_{j+1}|^{1/q-1/p},
\end{align}
because $\alpha _1=\frac{1}{d(1-\sigma)}\Big(da_6/a_4-m(a_4-a_6)+1/q-1/p\Big)$.

In the same way we obtain that, when $r-j\ell \geq 0$,
\begin{align}\label{W2}
\|W_{m,j+1}-W_{m,j}\|_{L^q(\mathbb{R}^n)}&\leq C2^{-r\frac{1-\sigma}{\sigma}(1/p-1/q)\alpha _2+(r-j\ell )[m(a_4-a_6)+da_4/a_6+1/q-1/p]}|E_{j+1}|^{1/q-1/p}\nonumber\\
&\hspace{-1cm}\leq C2^{r[(1/q-1/p)(\frac{1-\sigma}{\sigma }\alpha _2+1)+m(a_4-a_6)+da_4/a_6]}2^{-j\ell (m(a_4-a_6)+da_4/a_6+1/q-1/p)}|E_{j+1}|^{1/q-1/p}\nonumber\\
&\hspace{-1cm}=C_22^{-j\ell (m(a_4-a_6)+da_4/a_6+1/q-1/p)}|E_{j+1}|^{1/q-1/p},
\end{align}
since $\alpha _2=\frac{1}{d(1-\sigma )}\Big(da_4/a_6+m(a_4-a_6)+1/q-1/p\Big)$ Note that the positive constants $C_1$ and $C_2$ are independent of $b$.

We define $\lambda_j$ as follows,
$$
\lambda _j=\left\{\begin{array}{ll}
C_12^{-j\ell (m(a_6-a_4)+da_6/a_4+1/q-1/p)},& {\rm when} \;\;r-j\ell <0,\\
C_22^{-j\ell (m(a_4-a_6)+da_4/a_6+1/q-1/p)},&{\rm when}\;\; r-j\ell \geq 0.
\end{array}
\right.
$$
We can write
$$
W_{j+1,m}-W_{j,m}=\lambda _j{\mathfrak a}_j,
$$
where ${\mathfrak a}_j=(W_{j+1,m}-W_{j,m})/\lambda _j$. Then, ${\mathfrak a}_j$ is a $(p,q,m)$-atom and, since $d>a_4(m(a_4-a_6)+1/p-1/q)/a_6$,
$$
\sum_{j\in \mathbb{N}}\lambda _j^p\leq C,
$$
where $C>0$ does not depend on $b$.

The proof of the theorem is thus finished.

\end{proof}

\begin{thebibliography}{10}
\bibitem{BB}
{\sc B.~Barrios and J.~J. Betancor}, {\em Anisotropic weak {H}ardy spaces and
  wavelets}, J. Funct. Spaces Appl.,  (2012), pp.~Art. ID 809121, 17.

\bibitem{BD}
{\sc J.~J. Betancor and W.~Dami{\'a}n}, {\em Anisotropic local {H}ardy spaces},
  J. Fourier Anal. Appl., 16 (2010), pp.~658--675.

\bibitem{Bow}
{\sc M.~Bownik}, {\em Anisotropic {H}ardy spaces and wavelets}, Mem. Amer.
  Math. Soc., 164 (2003), pp.~vi+122.

\bibitem{Bown}
\leavevmode\vrule height 2pt depth -1.6pt width 23pt, {\em Boundedness of
  operators on {H}ardy spaces via atomic decompositions}, Proc. Amer. Math.
  Soc., 133 (2005), pp.~3535--3542.

\bibitem{BLYZ}
{\sc M.~Bownik, B.~Li, D.~Yang, and Y.~Zhou}, {\em Weighted anisotropic {H}ardy
  spaces and their applications in boundedness of sublinear operators}, Indiana
  Univ. Math. J., 57 (2008), pp.~3065--3100.

\bibitem{CW1}
{\sc R.~R. Coifman and G.~Weiss}, {\em Analyse harmonique non-commutative sur
  certains espaces homog\`enes}, Lecture Notes in Mathematics, Vol. 242,
  Springer-Verlag, Berlin, 1971.

\bibitem{CW2}
\leavevmode\vrule height 2pt depth -1.6pt width 23pt, {\em Extensions of
  {H}ardy spaces and their use in analysis}, Bull. Amer. Math. Soc., 83 (1977),
  pp.~569--645.

\bibitem{DDP}
{\sc W.~Dahmen, S.~Dekel, and P.~Petrushev}, {\em Two-level-split decomposition
  of anisotropic {B}esov spaces}, Constr. Approx., 31 (2010), pp.~149--194.

\bibitem{De}
{\sc S.~Dekel}, {\em On the analysis of anisotropic smoothness}, J. Approx.
  Theory, 164 (2012), pp.~1143--1164.

\bibitem{DPW}
{\sc S.~Dekel, P.~Petrushev, and T.~Weissblat}, {\em Hardy spaces on {$\Bbb
  R^n$} with pointwise variable anisotropy}, J. Fourier Anal. Appl., 17 (2011),
  pp.~1066--1107.

\bibitem{DW}
{\sc S.~Dekel and T.~Weissblat}, {\em On dual spaces of anisotropic {H}ardy
  spaces}, Math. Nachr., 285 (2012), pp.~2078--2092.

\bibitem{GLY}
{\sc L.~Grafakos, L.~Liu, and D.~Yang}, {\em Maximal function characterizations
  of {H}ardy spaces on {RD}-spaces and their applications}, Sci. China Ser. A,
  51 (2008), pp.~2253--2284.

\bibitem{HMY}
{\sc Y.~Han, D.~M{\"u}ller, and D.~Yang}, {\em Littlewood-{P}aley
  characterizations for {H}ardy spaces on spaces of homogeneous type}, Math.
  Nachr., 279 (2006), pp.~1505--1537.

\bibitem{Hu}
{\sc G.~Hu}, {\em Littlewood-{P}aley characterization of weighted anisotropic
  {H}ardy spaces}, Taiwanese J. Math., 17 (2013), pp.~675--700.

\bibitem{LL}
{\sc M.-Y. Lee and C.-C. Lin}, {\em The molecular characterization of weighted
  {H}ardy spaces}, J. Funct. Anal., 188 (2002), pp.~442--460.

\bibitem{LP}
{\sc X.~Li and L.~Peng}, {\em The molecular characterization of weighted
  {H}ardy spaces}, Sci. China Ser. A, 44 (2001), pp.~201--211.

\bibitem{MS}
{\sc R.~A. Mac{\'{\i}}as and C.~Segovia}, {\em A decomposition into atoms of
  distributions on spaces of homogeneous type}, Adv. in Math., 33 (1979),
  pp.~271--309.

\bibitem{Se}
{\sc C.~F. Serra}, {\em Molecular characterization of {H}ardy-{O}rlicz spaces},
  Rev. Un. Mat. Argentina, 40 (1996), pp.~203--217.

\bibitem{TW1}
{\sc M.~H. Taibleson and G.~Weiss}, {\em The molecular characterization of
  {H}ardy spaces}, in Harmonic analysis in {E}uclidean spaces ({P}roc.
  {S}ympos. {P}ure {M}ath., {W}illiams {C}oll., {W}illiamstown, {M}ass., 1978),
  {P}art 1, Proc. Sympos. Pure Math., XXXV, Part, Amer. Math. Soc., Providence,
  R.I., 1979, pp.~281--287.

\bibitem{TW2}
\leavevmode\vrule height 2pt depth -1.6pt width 23pt, {\em The molecular
  characterization of certain {H}ardy spaces}, in Representation theorems for
  {H}ardy spaces, vol.~77 of Ast\'erisque, Soc. Math. France, Paris, 1980,
  pp.~67--149.

\bibitem{Wan}
{\sc L.-A.~D. Wang}, {\em Multiplier theorems on anisotropic Hardy spaces},
  ProQuest LLC, Ann Arbor, MI, 2012.
\newblock Thesis (Ph.D.)--University of Oregon.

\bibitem{ZL}
{\sc K.~Zhao and L.-l. Li}, {\em Molecular decomposition of weighted
  anisotropic {H}ardy spaces}, Taiwanese J. Math., 17 (2013), pp.~583--599.
\end{thebibliography}

\end{document}